\newtheorem{theo}{Theorem}
\newtheorem{lemma}{Lemma}[section]
\newtheorem{coro}[lemma]{Corollary}
\newtheorem{definition}[lemma]{Definition}
\newtheorem{claim}[lemma]{Claim}
\newtheorem{fact}[lemma]{Fact}
\newcommand{\F}{\mathcal{F}}
\newcommand{\N}{\mathbb{N}}
\newcommand{\R}{\mathbb{R}}
\newcommand{\ith}{i\text{-th}}
\newcommand{\kth}{k\text{-th}}
\newcommand{\s}[1]{\left\lvert #1 \right\rvert}
\newcommand{\floor}[1]{\left\lfloor{#1}\right\rfloor}
\newcommand{\ceil}[1]{\left\lceil #1 \right\rceil}
\newcommand{\sub}{\subseteq}
\newcommand{\sm}{\setminus}
\renewcommand{\a}{\alpha}
\renewcommand{\d}{\delta}
\newcommand{\e}{\epsilon}
\newcommand{\lm}{\lambda}
\renewcommand{\l}{\ell}
\newcommand{\D}{\Delta}
\DeclareMathOperator{\sep}{sep_E}
\DeclareMathOperator{\sepV}{sep_V}
\newcommand{\E}{\mathbb{E}}
\DeclareMathOperator{\Var}{Var}
\DeclareMathOperator{\poly}{poly}
\DeclareMathOperator{\rank}{rank}
\DeclareMathOperator{\spanV}{span}
\DeclareMathOperator{\UG}{UG}
\date{}
\title{Decomposing a Graph Into Expanding Subgraphs}
\author{
Guy Moshkovitz\thanks{School of Mathematics, Tel Aviv University, Tel Aviv, Israel 69978. Email: {\tt guymosko@tau.ac.il}. Supported in
part by ISF grant 224/11.}
\and Asaf Shapira\thanks{School of Mathematics, Tel Aviv University, Tel Aviv, Israel 69978. Email: {\tt asafico@tau.ac.il}. Supported in part by ISF Grant 224/11 and a Marie-Curie CIG Grant 303320.}
}
\begin{document}

\maketitle

\begin{abstract}

A paradigm that was successfully applied in the study of both pure and algorithmic problems in graph theory can be colloquially summarized as stating that \emph{any graph is close to being the disjoint union of expanders}. Our goal in this paper is to show that in several of the instantiations of the above approach, the quantitative bounds that were obtained are essentially best possible. Three examples of our results are the following:

\begin{itemize}

\item A classical result of Lipton, Rose and Tarjan from 1979 states that if $\F$ is a hereditary family of
graphs and every graph in $\F$ has a vertex separator of size $n/(\log n)^{1+o(1)}$, then every graph in $\F$ has $O(n)$ edges.
We construct a hereditary family of graphs with vertex separators of size $n/(\log n)^{1-o(1)}$ such that not all graphs in the family have $O(n)$ edges.

\item Motivated by the Unique Games Conjecture, Trevisan and Arora-Barak-Steurer 
showed that given a graph $G$, one can remove only $1\%$ of $G$'s edges and thus obtain a graph in which
each connected component has good expansion properties. We show that in both of these decomposition results, the expansion properties they
guarantee are (essentially) best possible even when one is allowed to remove $99\%$ of $G$'s edges.
In particular, our results imply that the eigenspace enumeration approach of Arora-Barak-Steurer
cannot give (even quasi-) polynomial time algorithms for unique games.

\item Sudakov and the second author have recently shown that every graph with average degree $d$ contains an $n$-vertex subgraph
with average degree at least $(1-o(1))d$ and vertex expansion $1/\log^{1+o(1)} n$. We show that one cannot guarantee a better vertex-expansion
even if allowing the average degree to be $O(1)$.

\end{itemize}

The above results are obtained as corollaries of a new family of graphs, which we construct by picking random subgraphs of the hypercube, and
analyze using (simple) arguments from the theory of metric embedding.

\end{abstract}



\section{Introduction}\label{sec:intro}

In recent years a certain paradigm has emerged which roughly says that any graph is close to being a vertex-disjoint union of expanders. Unlike the $\e$-regular partitions of Szemer{\'e}di~\cite{Szemeredi78} and the cut decompositions of Frieze-Kannan~\cite{FriezeKa99} which are relevant only for dense graphs (i.e., with $\Theta(n^2)$ edges), this paradigm is applicable for graphs of arbitrary density. Generally speaking, its usefulness stems from the fact that it allows for a divide-and-conquer approach, in essence reducing a problem on general graphs to the special case of expander graphs.

For algorithmic problems, this paradigm has been useful, for example, in designing approximation algorithms related to the Unique Games Conjecture~\cite{Khot02}, as in the seminal work of Arora-Barak-Steurer~\cite{AroraBaSt10} and of Trevisan~\cite{Trevisan05}. It has also seen applications in property-testing algorithms~\cite{CzumajShSo09,GoldreichRo99}, as well as in data structure design~\cite{PatrascuTh07}.
In graph theory, instantiations of this paradigm include the theorem of Lipton-Tarjan-Rose~\cite{LiptonRoTa79} on vertex separators in hereditary families, 
the results of Linial-Saks~\cite{LinialSa93} and Leighton-Rao~\cite{LeightonRa99} on low-diameter decompositions,
as well as results in the field of graph minors~\cite{KostochkaPy88,ShapiraSu08} and cycle packing~\cite{CFS}.
See Section~\ref{sec:app} for some further discussion.

We note that in all applications of the regularity lemmas~\cite{FriezeKa99,Szemeredi78} one uses the same notion of expansion\footnote{In the setting of the
regularity lemma, expansion is referred to as being $\e$-regular.}.
On the other hand, when dealing with sparse graphs and applying the above-mentioned paradigm, each application calls for a different notion of expansion. Our main goal in this paper is to show that in several of the above-mentioned applications,
the different notions of expansion that were used are quantitatively best possible.

As it turns out, all our results can be deduced from a construction of a new family of graphs (stated in Theorem~\ref{theo:main}), whose main property is that every subgraph of it has a small \emph{edge separator}, which in particular implies small vertex/edge expansion. We next recall some basic definitions.

\begin{definition}[Edge/Vertex Separator]
An \emph{edge separator} in a graph $G=(V,E)$ is a set of edges whose removal disconnects $G$ into two (not necessarily connected) subgraphs, each on at most $\frac23\s{V}$ vertices. We write $\sep(G)$ for the minimum cardinality of an edge separator in $G$.\\
A \emph{vertex separator} is defined in the same way, only now vertices are removed rather than edges. We write $\sepV(G)$ for the minimum cardinality of a vertex separator in $G$
\end{definition}

Note that $\sepV(G)\le\sep(G)$, simply by taking an arbitrary endpoint of each separator edge.
The \emph{girth} of a graph is the minimum length of a cycle in it.
We henceforth write $\log(\cdot)$ for $\log_2(\cdot)$.
Our main technical result in this paper, stated next, implies that there exist graphs with a super-linear number of edges, whose girth is $(\log n)^{1-o(1)}$ and whose $t$-vertex subgraphs all have expansion at most $(\log t)^{-1+o(1)}$.

\begin{theo}\label{theo:main}
For any $n,k$ with $2 \le k \le \frac{1}{648}\log\log n$ there is an $n$-vertex graph $G=G_{n,k}$ satisfying:
\begin{enumerate}
\item $G$ has average degree at least $k$ and maximum degree at most $6k$.
\item $G$ has girth at least $\log n/(6k)^2$.
\item Every $t$-vertex subgraph $H$ of $G$
with $t\ge \log n/(6k)^2$
satisfies
\begin{equation}\label{eq:expansion}
\sepV(H) \le \sep(H)\le \frac{t}{\log t}\cdot(\log\log t)^2 \;.
\end{equation}
\end{enumerate}
\end{theo}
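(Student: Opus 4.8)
The plan is to build $G=G_{n,k}$ as a sparse random subgraph of the $d$-dimensional hypercube $Q_d$ with $d=\ceil{\log n}$, identifying $V(Q_d)$ with $\{0,1\}^d$. Retain each edge of $Q_d$ independently with probability $p=2k/d$ (so the expected average degree is $2k$), then clean up in two steps: delete every vertex whose retained degree exceeds $6k$, and delete one edge from every cycle of length $<g:=\log n/(6k)^2$; finally restrict to $n$ suitably chosen vertices, absorbing this last step into the choice of the constant in $p$. After the two clean-up steps properties (i) and (ii) are automatic, so the point is that clean-up is cheap. For the degree step, $\deg(v)\sim\mathrm{Bin}(d,2k/d)$, and a Chernoff bound gives $\E[\deg(v)\mathbf 1\{\deg(v)>6k\}]\le 2k\cdot\Pr[\mathrm{Poi}(2k)\ge 6k]<2k/50$, i.e.\ in expectation a $<1/25$ fraction of edges is removed. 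For the girth step, a $2\l$-cycle in $Q_d$ uses $\l$ of the $d$ directions exactly twice each, so $Q_d$ has $\le 2^d(2d\l)^{\l}$ of them; since $d\ge\log n$ we have $g/2\le d/(72k^2)$, hence $8\l k^2/d\le 1/9$ for all $\l\le g/2$, and the expected number of surviving short cycles is $\le\sum_{\l\ge2}2^d(2d\l p^2)^{\l}=\sum_{\l\ge2}2^d(8\l k^2/d)^{\l}\le 2^d/72$. Thus clean-up removes in expectation only a constant fraction of the $\Theta(nk)$ edges, so by a first-moment/Chernoff argument some outcome has surviving average degree $(1-o(1))2k\ge k$; fixing it yields (i)--(ii). (Throughout, $k\le\tfrac1{648}\log\log n$ is used mainly to force $n$ --- hence $g$ --- large enough that the estimates below become genuine inequalities.)

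For (iii), first note that deleting edges cannot increase $\sep$, so it suffices to bound $\sep(G[W])$ for every $W\sub V(G)$ with $\s W=t\ge g$, and then $\sepV\le\sep$ gives the left inequality of \eqref{eq:expansion} for free. The structural input is that $V(G)\sub\{0,1\}^d$ and $E(G)\sub E(Q_d)$: thus the Hamming metric on $W$ is isometrically an $\ell_1^d$ metric, namely $d_Q|_W=\sum_{i=1}^{d}\d_i$ where $\d_i$ is the cut metric of the $i$-th coordinate partition $(W_i^0,W_i^1)$. Write $a_i=\s{W_i^0}$, let $e_i$ be the number of $i$-direction edges of $G[W]$, put $e(G[W])=\sum_i e_i$, and set $\sigma:=\sum_i a_i(t-a_i)=\sum_{u<v\in W}d_Q(u,v)$. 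Since every edge of $G[W]$ in direction $i$ joins $W_i^0$ to $W_i^1$ we have $e_i\le\min(a_i,t-a_i)$, and since $a_i(t-a_i)\ge\tfrac t2\min(a_i,t-a_i)$ this yields the basic inequality $\sigma\ge\tfrac t2\,e(G[W])$.

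I would then invoke the standard metric-embedding fact (the cut-cone / Deza--Laurent representation, together with a Leighton--Rao-type $O(\log t)$ loss to make the cut $\tfrac23$-balanced) that an $\ell_1$ metric on $V(G[W])$ certifies a $\tfrac23$-balanced cut of $G[W]$ of size at most $\g\log t\cdot t\cdot e(G[W])/\sigma$ for an absolute constant $\g$. Now dichotomize on $\sigma$, using also $e(G[W])\le 3kt$ from the degree bound. If $\sigma\ge 3\g\,k\,t(\log t)^2/(\log\log t)^2$, then the balanced cut above has size $\le\g\log t\cdot 3kt^2/\sigma\le t(\log\log t)^2/\log t$, which is \eqref{eq:expansion}. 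If instead $\sigma$ is smaller, then $e(G[W])\le 2\sigma/t<6\g\,k(\log t)^2/(\log\log t)^2$, and the trivial bound $\sep(G[W])\le e(G[W])$ already gives \eqref{eq:expansion}, because $6\g\,k(\log t)^3\le t(\log\log t)^4$ holds throughout $g\le t\le n$ --- and this last polynomial inequality is exactly what the numerical hypothesis $k\le\tfrac1{648}\log\log n$ is calibrated to guarantee (it makes $t\ge g$ much larger than $k(\log t)^2$, and more).

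The main obstacle is the balanced-cut rounding underlying the ``large $\sigma$'' case --- i.e.\ genuinely producing a $\tfrac23$-balanced cut of small size from the $\ell_1$ structure, rather than an a priori unbalanced one. Averaging $d_Q|_W=\sum_i\d_i$ over all $i$ in the cut cone gives a \emph{coordinate} cut of ratio $e_i/(a_i(t-a_i))\le e(G[W])/\sigma$, which is excellent but may be very unbalanced; restricting the average to the ``$W$-balanced'' directions $I^*=\{i:\ t/3\le a_i\le 2t/3\}$ instead yields a $\tfrac23$-balanced coordinate cut of size $\le 9\,e(G[W])/(8\,\s{I^*})$, beating $t(\log\log t)^2/\log t$ unless $\s{I^*}$ is small. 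The delicate case is therefore when $W$ is geometrically concentrated --- spanning only few balanced cube-directions --- while $t$ is large; there one would iterate the decomposition, projecting out $I^*$ and recursing on the (few) resulting constant-fibers, and show that the cuts accumulated over all $O(\log\log t)$-or-so levels still sum to at most $t(\log\log t)^2/\log t$. Verifying that this recursion, together with the two $\sigma$-regimes, covers every scale $t\in[g,n]$ --- for which, once more, the constant $\tfrac1{648}$ is tuned --- is the technical core of the proof.
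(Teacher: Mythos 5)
Your construction of $G_{n,k}$ and the argument for items~(i) and~(ii) match the paper's: a random subgraph of $Q_d$ with edge probability $\Theta(k/d)$, cleaned up by the deletion method to cap degrees and kill short cycles. The $\ell_1$/cut-cone observation you bring in for item~(iii) is also the paper's starting point (it is the argument of Linial--London--Rabinovich). However, the central step on which your proof of item~(iii) rests is a lemma that is simply false: an $\ell_1$ semimetric on $V(H)$ does \emph{not} certify a $\tfrac23$-balanced cut of size at most $\gamma\log t\cdot t\cdot e(H)/\sigma$. Take $H$ to be a path on $t$ vertices embedded isometrically in $Q_{t-1}$ (the vertices $0,e_1,e_1+e_2,\ldots$): then $e(H)=t-1$, $\sigma=\sum_{i<j}|i-j|\approx t^3/6$, so your bound reads $\approx 6\gamma\log t/t$, which is $<1$ for large $t$, while every $\tfrac23$-balanced cut of a connected graph has size at least $1$. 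The correct certificate from the cut-cone decomposition is an \emph{unbalanced} sparse cut (one coordinate cut achieves ratio $|\partial T|/(|T|\,|S\setminus T|)\le e(H)/\sigma$), and converting this into a balanced separator by iterating costs a factor of order $t$, not $\log t$; with that correction your $\sigma$-dichotomy no longer closes in either regime. You do flag this as ``the main obstacle,'' but the suggested patch (restricting to $W$-balanced directions $I^*$ and recursing on fibers) also does not work in the regime you must cover: when $W$ is a subcube of dimension $m=\log t$ one has $|I^*|=\log t$, and the resulting cut bound $e(H)/\log t\lesssim kt/\log t$ misses the target $t(\log\log t)^2/\log t$ by a factor of $k/(\log\log t)^2$, which is unbounded near $k\approx\log\log n/648$.

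The paper avoids the balanced-cut issue entirely, and the route it takes is worth contrasting with yours. First, the LLR argument is used only to bound \emph{edge expansion}: one picks the single cut direction minimizing $x_i/y_i$ and, combining with the lower bound $\sigma\ge\binom{t}{2}\log(t/2)/(2\log d)$ that holds because $Q_d$ has maximum degree $d$ (your $\sigma$-dichotomy is replaced by this unconditional lower bound on $\sigma$), gets edge expansion $\le 2r\log d/\log(t/2)$ for any $t$-vertex subgraph of $Q_d$ of average degree $r$ (Lemma~\ref{lemma:cubeExp}). Second, for $t<2^{d^{1/3}}$ this bound is too weak because $\log d$ can dominate $\log\log t$; there the paper switches to a \emph{girth} argument (Lemma~\ref{lemma:girthE}): large expansion would force a short cycle, contradicting item~(ii). (Your sketch of item~(iii) makes no use of the girth at all, which is another sign the small-$t$ regime is not handled.) Third, the hereditary edge-expansion bound is boosted to an edge \emph{separator} by the elementary iteration of Lemma~\ref{lemma:boost}, removing non-expanding sets one after another until a $\tfrac23$-balanced part accumulates; this costs only the harmless factor $(2/3)n$ over the expansion. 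These three ingredients --- expansion rather than balanced cut, a girth case, and the boosting lemma --- are the missing pieces in your proposal.
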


Let us briefly explain why the quantitative estimates in Theorem \ref{theo:main} are (essentially) best possible.
First, observe that for subgraphs $H$ of $G_{n,k}$ in Theorem~\ref{theo:main} on fewer than $\log n/(6k)^2$ vertices, we of course have a much stronger
bound on $\sepV(H)$ and $\sep(H)$ than the one stated in~(\ref{eq:expansion}) for larger subgraphs. To see this, one just has to recall the following well-known fact.

\begin{fact}\label{fact:treeSep}
Every tree $T$ satisfies $\sepV(T)\le 1$.
\end{fact}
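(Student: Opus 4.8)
The plan is to show that every tree $T$ has a single vertex whose deletion splits $T$ into pieces that can be regrouped into two parts, each with at most $\tfrac23\size{V(T)}$ vertices; such a vertex is by definition a vertex separator, so $\sepV(T)\le 1$. Write $n=\size{V(T)}$. The cases $n\le 2$ are immediate (delete the unique vertex, respectively any vertex), so assume $n\ge 3$.

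First I would take a \emph{centroid} of $T$: a vertex $v$ minimizing the number of vertices in the largest connected component of $T-v$. The key claim is that every component of $T-v$ then has at most $n/2$ vertices. Suppose instead that some component $C$ has $\size{C}>n/2$, and let $u$ be the unique neighbor of $v$ lying in $C$ (unique since $T$ is acyclic). Passing from $v$ to $u$, the components of $T-u$ are precisely the components of $T[C]-u$, each of size strictly less than $\size{C}$, together with one further component, namely $v$ together with all components of $T-v$ other than $C$, which has size $n-\size{C}<n/2<\size{C}$. Hence the largest component of $T-u$ is strictly smaller than that of $T-v$, contradicting the choice of $v$.

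Next I would regroup. Let the components of $T-v$ have sizes $a_1,\dots,a_m$, so that $\sum_i a_i=n-1$ and every $a_i\le n/2$. If some $a_i$ exceeds $n/3$, place that single component on one side — of size $\le n/2\le\tfrac23 n$ — and all the others on the other side, whose size is $(n-1)-a_i<n-n/3=\tfrac23 n$. Otherwise every $a_i\le n/3$, and assigning the components one at a time, each to the currently lighter side, keeps the two side-totals within $\max_i a_i\le n/3$ of each other; since their sum is $n-1$, the heavier side has size at most $\tfrac12\bigl((n-1)+n/3\bigr)<\tfrac23 n$. In either case deleting $v$ yields the required partition of $V(T)\setminus\{v\}$ into two parts with no edges between them, so $\sepV(T)\le 1$.

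The argument is elementary and presents no real obstacle; the only two points that deserve a line of justification are the centroid property (the one-step "push toward the heavy side" argument above) and the verification that the two-sided regrouping never exceeds the $\tfrac23 n$ threshold — which is exactly why the separator definition allows parts of size up to $\tfrac23\size{V}$ rather than $\tfrac12\size{V}$.
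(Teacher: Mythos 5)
The paper does not prove this fact --- it is stated as a ``well-known fact'' with no proof supplied, so there is no argument to compare yours against. Your proof is correct and is the standard one: the centroid property (every component of $T-v$ has at most $n/2$ vertices, shown by the ``push toward the heavy component'' contradiction) is sound, and your two-case regrouping argument is verified easily --- in the balanced-greedy case the side totals stay within $\max_i a_i \le n/3$ of each other by induction, and $\tfrac12\bigl((n-1)+n/3\bigr) = \tfrac{2n}{3}-\tfrac12 < \tfrac23 n$. The only thing worth noting is that for $n\le 2$ the definition of separator is being stretched a bit (one of the two ``parts'' is empty), but that is a convention shared with the paper's definition and not a defect of your argument.
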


So item $(ii)$ in Theorem~\ref{theo:main} implies that subgraphs $H$ of $G_{n,k}$ on fewer than $\log n/(6k)^2$ vertices satisfy
$\sepV(H) \leq 1$ and thus also $\sep(H) \le \D(H) \le \D(G) \, (\le \log\log n)$.\footnote{$\D(\cdot)$ denotes the maximum degree of a graph.}
Note that the bound on $\sep(H)$ cannot be improved by much beyond $\D(G)$ since $G$ contains $K_{1,\D(G)}$ (the star with $\D(G)$ leaves) as a subgraph, and
it is not hard to see that $\sep(K_{1,\Delta}) \geq \Delta/3$. Second, as we discuss in Section~\ref{sec:conc}, any family of graphs
satisfying item $(iii)$ (or variants of it) contains at most $n(\log\log n)^{O(1)}$ edges, so one cannot construct graphs as above with average degree much larger than $\log\log n$. Finally, it follows from Theorem~\ref{theo:SS} below that \emph{any} graph of average degree at least $3$ has a subgraph that is not a tree and does not contain a separator of size much smaller than the one in~(\ref{eq:expansion}), so our estimate on the size of the separator in item $(iii)$ cannot be reduced by more than some $\log\log t$ terms. Actually, the proof of Theorem~\ref{theo:main} shows that in~(\ref{eq:expansion}) we can replace
the $(\log\log t)^2$ term by $(\log\log t)^{1+o(1)}$, but we chose to use the simpler/cleaner expression.

\subsection{Small set expansion}

Next we describe a strengthening of Theorem~\ref{theo:main} which will be important for some of our applications.
Notice that Theorem~\ref{theo:main} gives a graph whose every subgraph has a \emph{large} subset (i.e., consisting of at least $1/3$ of its vertices) that does not expand.
One may thus further ask for a graph that does not contain even \emph{small set expanders}; that is, a graph whose every subgraph has a small non-expanding set (i.e., consisting of $o(1)$-fraction of the vertices).
The notion of small set expanders has recently received much attention in theoretical computer science (see, e.g.,~\cite{AroraBaSt10,Steurer10,SteurerRa10}). For example, it is believed that solving unique games
is intimately connected with the problem of finding small non-expanding sets.

For a graph $H$ and a subset $\emptyset \neq A \sub V(H)$ we write $\partial_H(A)$ for the set of edges of $H$ with precisely one endpoint in $A$, and $\phi_H(A)$ for the \emph{edge expansion} of $A$ in $H$, that is,
$$\phi_H(A) = \frac{\s{\partial_H(A)}}{\s{A}} \;.$$
We have the following ``small set'' counterpart of Theorem~\ref{theo:main}. Note that here we in fact prove the stronger property that one can essentially \emph{partition} the subgraph into small non-expanding subsets.

\begin{theo}\label{theo:SSE}
Let $G=G_{n,k}$ be a graph from Theorem~\ref{theo:main} with $2 \le k \le \frac{1}{648}\log\log\log n$. For every $t$-vertex subgraph $H$ of $G$ with $t\ge \log n/(6k)^2$, and every $1/t \le \mu \le 2/3$, there are $\ceil{1/4\mu}$ mutually disjoint subsets $A_i \sub V(H)$ of size $\mu t/3 \le \s{A_i} \le \mu t$ satisfying
\begin{equation}\label{eq:SSEexpansion}
\phi_H(A_i) \le \frac{\log (1/\mu)}{\log t}\cdot(14\log\log t)^2 \;.
\end{equation}
\end{theo}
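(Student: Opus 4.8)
The plan is to bootstrap Theorem~\ref{theo:main} by a recursive/iterative application of the edge-separator bound~(\ref{eq:expansion}), turning a single balanced separator into a full partition into small pieces. Concretely, given a $t$-vertex subgraph $H$ and a target fraction $\mu$, I would build a binary recursion tree: apply~(\ref{eq:expansion}) to $H$ to split it (after deleting at most $\frac{t}{\log t}(\log\log t)^2$ edges) into two parts each of size at most $\frac23 t$; then recurse on each part, always splitting the current part with its own balanced edge separator from~(\ref{eq:expansion}), and stopping a branch once the current part has size between roughly $\mu t/3$ and $\mu t$. Since each split reduces the size by a factor in $[\frac13,\frac23]$, after $O(\log(1/\mu))$ levels every surviving piece has the desired size, and the recursion tree has depth $\ell := O(\log(1/\mu))$. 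Collecting the leaves gives a partition of $V(H)$ into parts $A'_1,\dots,A'_m$ of size at most $\mu t$; I then merge/pad parts that are too small (adjacent leaves, or a greedy repacking) to ensure each final $A_i$ has size at least $\mu t/3$, which also forces $m \ge \ceil{1/4\mu}$, giving enough sets.

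The key quantitative step is bounding $\phi_H(A_i) = \s{\partial_H(A_i)}/\s{A_i}$. The edges leaving a leaf $A_i$ are all contained in the union of separators used along the root-to-leaf path, so $\s{\partial_H(A_i)}$ is at most the total number of edges deleted in the ancestors of $A_i$. The crucial observation is that at depth $j$ the part being split has size at most $(2/3)^j t$, so the separator removed there has size at most $\frac{(2/3)^j t}{\log((2/3)^j t)}(\log\log t)^2$; but this separator is shared among many leaves. To get a clean per-vertex bound I would instead charge deleted edges downward: an edge deleted when splitting a part $P$ is charged to the (at most $\s{P}$) vertices of $P$, contributing at most $\frac{1}{\log \s{P}}(\log\log t)^2 \le \frac{1}{\log(\mu t/3)}(\log\log t)^2$ per vertex per level (using $\s{P}\ge \mu t/3$ at every internal node on the way to a surviving leaf). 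Summing over the $\ell = O(\log(1/\mu))$ levels gives $\s{\partial_H(A_i)} \le \s{A_i}\cdot \frac{O(\log(1/\mu))}{\log(\mu t) }(\log\log t)^2$. Since $\mu \le 2/3$ we have $\log(\mu t) = \Omega(\log t)$, so $\phi_H(A_i) \le \frac{\log(1/\mu)}{\log t}(\log\log t)^{2+o(1)}$, and tracking the constants carefully (the factor $14$, and the square) yields~(\ref{eq:SSEexpansion}). One subtlety: when $\mu$ is close to $1$ (equivalently $1/t$), $\log(\mu t/3)$ can be as small as $\Theta(1)$; there one should invoke Fact~\ref{fact:treeSep} and item~$(ii)$ of Theorem~\ref{theo:main} instead — for parts smaller than $\log n/(6k)^2$ the induced subgraph is a tree, so it has a single-vertex separator and the recursion terminates trivially with $\phi_H(A_i) \le \Delta(G)$, which is absorbed into the bound.

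The main obstacle I anticipate is the \emph{bookkeeping of the separator sizes against the right logarithm}: the bound in~(\ref{eq:expansion}) uses $\log t_P$ where $t_P = \s{P}$ is the current (shrinking) part size, not the original $t$, so naively the denominators degrade as the recursion deepens. The fix is exactly the charging argument above together with the fact that we stop as soon as $\s{P} \le \mu t$, so every separator we actually use satisfies $\log \s{P} \ge \log(\mu t/3) = \log t - O(\log(1/\mu))$, which is still $(1-o(1))\log t$ for the relevant range of $\mu$ (and handled separately by trees otherwise). A secondary nuisance is ensuring the lower bound $\s{A_i}\ge \mu t/3$ and the count $\ceil{1/4\mu}$ simultaneously; this is a routine merging of undersized leaves, using that each split multiplies the size by at least $1/3$ so a part never drops below $\mu t/3$ in one step once it exceeds $\mu t$. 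I would also double-check that $(\log\log \s{P})^2 \le (\log\log t)^2$ throughout, which is immediate since $\s{P}\le t$, so no loss there.
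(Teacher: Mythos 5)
Your recursive-bisection scheme is a genuinely different route from the paper's. The paper instead shrinks a \emph{single nested chain} $V(H)=S_0\supseteq S_1\supseteq\cdots\supseteq S_s=A$, at each step applying the separator of $H[S_i]$ and keeping the side with smaller edge-expansion in $H$; the elementary inequality $\min\{a/c,\,b/d\}\le(a+b)/(c+d)$ then gives $\phi(S_{i+1})\le\phi(S_i)+2f(|S_i|)$, which telescopes to the single-set bound~(\ref{eq:SSEclaim}). To get many disjoint sets the paper peels off $A_1$, repeats inside $H[V\setminus A_1]$, and so on, finishing with an averaging argument. Your approach instead builds a full binary recursion tree and takes all the leaves at once. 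This is a natural alternative, but the bookkeeping of $\partial_H(A_i)$ has a real gap.

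When you charge an edge deleted while splitting a part $P$ ``to the vertices of $P$'', the quantity $\frac{1}{\log|P|}(\log\log t)^2$ per vertex is the \emph{average} charge, not a per-vertex upper bound: nothing prevents the separator edges of $P$ from having all of their endpoints on one side concentrated inside a single leaf $A_i$, in which case $\partial_H(A_i)$ absorbs the entire separator of $P$ rather than an $|A_i|/|P|$ share of it. So the clean per-leaf bound you assert, $|\partial_H(A_i)|\le|A_i|\cdot\frac{O(\log(1/\mu))}{\log(\mu t)}(\log\log t)^2$, does not hold for every leaf. What does hold is the aggregate bound $\sum_i|\partial_H(A_i)|\le 2\sum_{P}\sep(H[P])$ over internal nodes $P$ (each separator edge lies in at most two leaf boundaries). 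Combined with $|A_i|\ge\mu t/3$ and the fact that there are at least $1/\mu$ leaves, Markov's inequality then gives that at least half the leaves satisfy the stated expansion bound --- precisely the averaging step the paper performs in its second stage around~(\ref{eq:final-1}). With that step added, and with the fallback to Fact~\ref{fact:treeSep} and item~$(ii)$ of Theorem~\ref{theo:main} for parts below the girth threshold (which you correctly flag), your recursion-tree argument should go through; without it, the per-leaf claim is unjustified.
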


Note that in Theorem~\ref{theo:SSE} we can find, in any large subgraph $H$, subsets that simultaneously have ``measure'' $o(1)$ and edge expansion $o(1)$.
For example, by setting $\mu=1/\log t$ in~(\ref{eq:SSEexpansion}), each of the subsets $A \sub V(H)$ produced by the theorem satisfies $\s{A}/t = 1/\log t$ and
$\phi_H(A) \le (1/\log t)(14\log\log t)^3 = 1/(\log t)^{1-o(1)}$.

Similarly to Theorem~\ref{theo:main}, the quantitative estimates in Theorem~\ref{theo:SSE} are (essentially) best possible.
It follows from Theorem~\ref{theo:SS} below that any graph of average degree at least $3$ has a $t$-vertex subgraph whose every subset of measure $\mu$ has edge expansion at least $\frac{\log(1/\mu)}{\log t}(\log\log t)^{-2}$. Thus, our estimate of the edge expansion cannot be reduced by more than some $\log\log t$ terms. Finally, note that while Theorem~\ref{theo:SSE} only deals with subgraphs $H$ on at least $\log n/(6k)^2$ vertices, any smaller subgraph is
a tree (due to item $(ii)$ in Theorem~\ref{theo:main}), implying that it trivially has small subsets with weak expansion.

\subsection{Paper overview}

In Section~\ref{sec:app} we describe the main results of this paper, which are the applications of Theorems~\ref{theo:main} and~\ref{theo:SSE},
showing that many decomposition-type results that were used in different areas of research are essentially best possible.
These include applications in approximation algorithms, property testing, extremal problems in graph theory as well as the nested dissection method.
The proof of Theorem~\ref{theo:main} is given in Section~\ref{sec:proof} (with some technical proofs differed to Section~\ref{sec:add})
and the proof of Theorem~\ref{theo:SSE} is given in Section~\ref{sec:proofSSE}.
Section~\ref{sec:conc} contains some concluding remarks and open problems.
Let us thus give an overview of the proofs of the our theorems.

The main idea of the proof of Theorem~\ref{theo:main} is to start with the Boolean hypercube $Q_d$ and choose an appropriate random subgraph $G$ of it.
Using (simple) probabilistic and combinatorial arguments, we show that this random subgraph $G$ satisfies items $(i)$ and $(ii)$ of Theorem~\ref{theo:main}.
To prove the third item, we first prove that all bounded degree subgraphs of $Q_d$ have small edge expansion. To this end, we use an argument similar to the one used by Linial, London and Rabinovich~\cite{LinialLoRa95} in the setting of metric embedding. 
Then, we proceed to ``boost'' this fact in order to show that every sufficiently large subgraph of $G$ actually has a small edge \emph{separator}, thus establishing item~$(iii)$ of Theorem~\ref{theo:main}.

In order to prove Theorem~\ref{theo:SSE}, we first show that the hereditary nature of item~$(iii)$ of Theorem~\ref{theo:main} can be used to iteratively find smaller and smaller subsets while controlling their expansion.
This enables us to find a single small subset having small expansion.
Next, we remove this subset and look for another small subset, this time having small expansion in the remaining subgraph. We continue in this manner until we get almost a partition of the vertex set. Finally, we show that many of the subsets we obtained have small expansion also in the original subgraph.

\section{Applications of Main Results}\label{sec:app}

\subsection{Edge density of hereditary families with small separators}

A family of graphs is said to be \emph{hereditary} if it is closed under taking induced subgraphs.
The well-known Planar Separator theorem of Lipton-Tarjan~\cite{LiptonTa79} asserts that any $n$-vertex planar graph has a vertex separator of cardinality at most $O(\sqrt{n})$. This influential result lead to many extensions for other hereditary families of graph (such as minor-free families, see~\cite{AlonSeTh94}). The notion of vertex separator in graphs has since found numerous applications in studying both pure and algorithmic problems in graph theory (see~\cite{sepbook} and its references). One of the first applications of separators was given by Lipton, Rose, and Tarjan~\cite{LiptonRoTa79} in their work on the nested dissection method. Intuitively, their result states that the reason planar graphs (and more generally minor-free graphs) have linearly many edges is that they have small separators.

\begin{theo}[\cite{LiptonRoTa79}, Theorem~10]\label{theo:LRT}
For every $\e>0$ there is $C>0$ for which the following holds. Let $\F$ be a hereditary family of graphs such that every $n$-vertex graph $G\in\F$ satisfies $\sepV(G) \le n/(\log n)^{1+\e}$.
Then every $n$-vertex graph in $\F$ has at most $Cn$ edges.
\end{theo}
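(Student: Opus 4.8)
The plan is to prove, for every $m\ge 2$, the slightly stronger bound $f(m)\le Cm-\b m(\log m)^{-\e/2}$ by strong induction on $m$, where $f(m)$ denotes the maximum number of edges of a graph in $\F$ on at most $m$ vertices (so $f$ is non-decreasing), and $\b,C$ are constants depending only on $\e$; taking $m=n$ then gives the theorem. Fix once and for all a constant $h=h(\e)$ large enough that $(\log m)^{1+\e}>12$ for all $m>h$ and $(\log h)^{\e/2}\ge 4/(\e\log\tfrac43)$, and set $\b:=h$ and $C:=2h$. For the base case $2\le m\le h$ we simply have $f(m)\le\binom m2<\tfrac12 hm\le hm\le Cm-\b m(\log m)^{-\e/2}$, using $(\log m)^{-\e/2}\le 1$.

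For the inductive step let $m>h$, and let $G\in\F$ with at most $m$ vertices attain $\s{E(G)}=f(m)$; we may assume $\s{V(G)}=m$ (otherwise $f(m)\le f(m-1)$ and we are done by induction, since $C\ge\b$). As $\F$ is hereditary, the separator hypothesis applies to $G$: there is $S\sub V(G)$ with $\s S\le m/(\log m)^{1+\e}<m/12$ such that $V(G)\sm S=A\sqcup B$ with no $A$--$B$ edges and $\s A,\s B\le\tfrac23 m$; hence $\s A,\s B>\tfrac{11}{12}m-\tfrac23 m=\tfrac14 m\ge 2$. Put $a:=\s A+\s S$ and $b:=\s B+\s S$; then $2\le a,b\le\tfrac34 m<m$ and $a+b=m+\s S\le m\bigl(1+(\log m)^{-1-\e}\bigr)$. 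Since there are no $A$--$B$ edges, every edge of $G$ lies inside $G[A\cup S]\in\F$ or inside $G[B\cup S]\in\F$, so $f(m)=\s{E(G)}\le f(a)+f(b)$. Feeding in the induction hypothesis at $a$ and $b$, and using $\log a,\log b\le\log m-\log\tfrac43$ together with the elementary bound $(\log m-\log\tfrac43)^{-\e/2}\ge(\log m)^{-\e/2}\bigl(1+\tfrac{\e\log\tfrac43}{2\log m}\bigr)$, one obtains
\[
f(m)\;\le\;C(a+b)-\b\bigl(a(\log a)^{-\e/2}+b(\log b)^{-\e/2}\bigr)\;\le\;Cm-\b m(\log m)^{-\e/2}+\Bigl(\frac{C}{(\log m)^{1+\e}}-\frac{\b\e\log\tfrac43}{2(\log m)^{1+\e/2}}\Bigr)m ,
\]
and the bracketed quantity is $\le 0$ because $C/\b=2\le\tfrac\e2\log\tfrac43\,(\log m)^{\e/2}$, which holds by the choice of $h$ since $m>h$. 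This closes the induction, and then $\s{E(G)}\le f(n)\le Cn$ for every $G\in\F$ on $n$ vertices.

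The crux of the matter — and the reason a naive ``apply the separator once, recurse, and add'' induction with fixed constants cannot work — is the vertex overhead $\s S$ that is copied into \emph{both} children, turning $m$ into $m+\s S$ at each split; this is exactly what the correction term $\b m(\log m)^{-\e/2}$ is there to absorb, and absorbing it forces $\b$ to be scaled up with the threshold $h$ (with $\b$ fixed, the base case requires $C\gtrsim h$ while the inductive step requires $C\lesssim(\log h)^{\e/2}$, which is impossible). The inductive step goes through because the per-split overhead is of order $(\log m)^{-(1+\e)}$, whereas the gain extracted from the size drop $a,b\le\tfrac34 m$ is of the larger order $(\log m)^{-(1+\e/2)}$, so the former is negligible against the latter precisely when $\e>0$ — which is also why the statement is false (up to lower-order terms) at $\e=0$, as the construction of this paper shows.
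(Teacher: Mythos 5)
The paper quotes this theorem from \cite{LiptonRoTa79} and does not reproduce a proof, so there is no in-paper argument to compare against. Your proof is correct and is essentially the classical separator-recursion argument of Lipton--Rose--Tarjan: the key points all check out, namely the subtracted correction term $\b m(\log m)^{-\e/2}$ absorbing the $|S|$-overhead that gets copied into both $G[A\cup S]$ and $G[B\cup S]$, the observation that the gain of order $(\log m)^{-1-\e/2}$ from the size drop $a,b\le\tfrac34 m$ dominates the $(\log m)^{-1-\e}$ overhead precisely because $\e>0$, and the calibration of $\b$ and $C$ against the base-case threshold $h$ (including the fallback $f(m)\le f(m-1)$ when the extremal graph has fewer than $m$ vertices, which uses $C\ge\b$ and that $x(\log x)^{-\e/2}$ has derivative at most $1$).
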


We mention that Fox and Pach~\cite{FoxPa08} strengthened Theorem~\ref{theo:LRT} by proving that even separators of size
$n/\log n(\log\log n)^{1+\e}$ guarantee\footnote{The result of \cite{FoxPa08} actually shows that having separators of size $n/\log n\log\log n(\log\log\log n)^2$ suffices.} that every graph in $\F$ has $O(n)$ edges.

Using Theorem~\ref{theo:main} we next show that the separation requirement in Theorem~\ref{theo:LRT} cannot be improved much beyond $n/\log n$.

\begin{coro}\label{coro:FoxPach}
There is a hereditary family of graphs $\F$ such that every $n$-vertex graph
$G\in\F$ satisfies $\sepV(G) \le (n/\log n)(\log\log n)^2$, yet there is no $C>0$ such that every $n$-vertex graph in $\F$ has at most $Cn$ edges.
\end{coro}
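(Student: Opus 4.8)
The plan is to deduce Corollary~\ref{coro:FoxPach} directly from Theorem~\ref{theo:main} by taking $\F$ to be the hereditary closure of the graphs $G_{n,k}$, with $k$ chosen as large as the theorem permits, namely $k = k(n) = \lfloor \frac{1}{648}\log\log n\rfloor$. First I would define $\F$ to be the family of all graphs that are isomorphic to an induced subgraph of some $G_{N,k(N)}$; by construction this is hereditary. The two things to check are that every member of $\F$ has the claimed separator bound, and that $\F$ does \emph{not} have a linear bound on the number of edges.

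For the edge count, this is immediate: $G_{N,k(N)} \in \F$ has $N$ vertices and, by item~$(i)$ of Theorem~\ref{theo:main}, average degree at least $k(N) = \Theta(\log\log N)$, hence $\Omega(N\log\log N)$ edges. Since $\log\log N \to \infty$, no absolute constant $C$ can bound the edge-to-vertex ratio over all of $\F$, which is exactly the second assertion of the corollary.

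The main point to get right is the separator bound for an \emph{arbitrary} member of $\F$. An $n$-vertex graph $H \in \F$ is (isomorphic to) an induced subgraph of some $G_{N,k(N)}$; call it a $t$-vertex subgraph with $t = n$. Here I must be a little careful about the regime $t < \log N/(6k(N))^2$: in that range item~$(ii)$ of Theorem~\ref{theo:main} (girth) together with Fact~\ref{fact:treeSep} gives that $H$ is a tree, so $\sepV(H)\le 1 \le (n/\log n)(\log\log n)^2$ trivially (for $n$ large; small $n$ can be absorbed by adjusting the statement's hidden constant or checked by hand). In the main regime $t\ge \log N/(6k(N))^2$, item~$(iii)$ gives $\sepV(H)\le \sep(H)\le \frac{t}{\log t}(\log\log t)^2 = \frac{n}{\log n}(\log\log n)^2$, which is precisely the bound claimed. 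So in all cases the separator bound holds, and the only subtlety is bookkeeping the two ranges of $t$ and making sure the parameter $k(N)$ stays within the admissible range $2\le k\le \frac{1}{648}\log\log N$ for all large $N$ (it does, and finitely many small $N$ are irrelevant since we only need $\F$ to fail the $O(n)$-edge bound, which it already does along the subsequence of large $N$).

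The hard part is essentially nil here — all the work is in Theorem~\ref{theo:main}, and this corollary is just choosing the parameter optimally and disentangling the ``small subgraph is a tree'' case from the main case. The only place to be slightly attentive is that one should phrase the family so that membership is genuinely hereditary (taking the union of induced-subgraph-closures of the $G_{N,k(N)}$ over all $N$), and note that passing to induced subgraphs only decreases $t$, so every member is covered by one of the two cases above.
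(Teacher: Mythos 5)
Your proof is correct and follows essentially the same approach as the paper: take $\F$ to be the hereditary closure of the graphs $G_{n,\log\log n/648}$, use items $(ii)$,$(iii)$ of Theorem~\ref{theo:main} together with Fact~\ref{fact:treeSep} for the separator bound, and item~$(i)$ for the superlinear edge count. The paper's proof is just slightly terser in combining the two $t$-ranges into a single citation of items $(ii)$ and $(iii)$ plus Fact~\ref{fact:treeSep}.
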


\begin{proof}
Let $\F$ be the family of graphs defined as follows:
$$
\F=\{G \,:\, G \text{ is an induced subgraph of } G_{n,\log\log n/648} \text{ for some } n\}\;,
$$
where $G_{n,\log\log n/648}$ are the graphs from Theorem~\ref{theo:main}. Note that $\F$ is, by definition, a hereditary family of graphs.
By items $(ii)$ and $(iii)$ of Theorem~\ref{theo:main} (and Fact \ref{fact:treeSep}), every $n$-vertex graph $G\in\F$ satisfies $\sepV(G) \le (n/\log n)(\log\log n)^2$, as required. On the other hand, by item $(i)$ of Theorem~\ref{theo:main}, the family $\F$ contains, for every large enough $n$, an $n$-vertex graph with $\Omega(n \log\log n)$ edges. This completes the proof.
\end{proof}

\subsection{Decomposing a graph into edge expanders}\label{subsec:Trevisan}

Trevisan~\cite{Trevisan05}, in his work on approximating constraints satisfaction problems related to the Unique Games Conjecture,
gave a combinatorial algorithm for approximating unique games.
The main component of the algorithm is a decomposition of any graph into disjoint expanders. Given the graph underlying the unique game input, the algorithm proceeds to solve the problem on each expander separately, which, as Trevisan shows, can be done in polynomial time.
We note that we will revisit this paradigm for solving unique games in  Section~\ref{subsec:ABS}, where we discuss the well-known Arora-Barak-Steurer~\cite{AroraBaSt10} algorithm.
We begin with the formal definitions.

A (linear) \emph{unique game} is a system of linear equations of the form $x_i-x_j=c_{i,j}\pmod{q}$.
Let $\UG_q(\d)$ denote the computational problem of deciding, given a unique game with equations modulo $q$, whether there is an assignment satisfying at least a $(1-\d)$-fraction of the equations or rather every assignment satisfies at most a $\d$-fraction of the equations (we are guaranteed that one of the two cases holds).
The influential Unique Games Conjecture\footnote{The version stated here was shown in~\cite{KhotKiMoOd07} to be equivalent to Khot's original formulation in~\cite{Khot02}.} of Khot~\cite{Khot02} postulates that
$\UG_q(\d)$ is NP-hard for every $0<\d<1/2$ and sufficiently large $q$.
Trevisan's algorithm efficiently solves a weaker version of $\UG_q(\d)$ (and therefore does not refute the Unique Games Conjecture).

A graph $G=(V,E)$ is said to have \emph{edge expansion} $\a$ if every subset $S\sub V$ with $\s{S}\le\s{V}/2$ satisfies $\s{\partial(S)} \ge \a\s{S}$ (recall that the \emph{edge boundary} $\partial(S)$ of $S$ is the set of edges with precisely one endpoint in $S$).
Trevisan proved the following decompositions theorem for arbitrary graphs.
Roughly speaking, it asserts\footnote{We note that prior to~\cite{Trevisan05}, Goldreich and Ron~\cite{GoldreichRo99} have implicitly proved
a result of the same spirit, whose exact quantitative properties are somewhat more complicated to state. See also~\cite{KannanVeVe04}} that, given any graph, one can remove few of the edges in order to disconnect it into disjoint
graphs of high expansion\footnote{Trevisan originally used the notion of conductance; our lower bound applies even for edge expansion.}.
\begin{theo}[\cite{Trevisan05}, Lemma~10]\label{theo:Trevisan}
Let $0<\e\le 1/2$.
From any $n$-vertex graph one can remove at most an $\e$-fraction of the edges to obtain a graph whose every connected component has edge expansion at least $\e/12\log n$.
\end{theo}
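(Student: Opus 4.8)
The plan is to follow Trevisan's own strategy: recursively cut the graph along sparse cuts, and bound the total number of deleted edges by a charging argument. Set $\a=\e/12\log n$ and maintain, starting from the connected components of $G$, a partition of $V$ into the vertex sets of the current components. As long as some current component $H$ has edge expansion below $\a$, it contains a set $S$ with $\emptyset\neq S\sub V(H)$, $\s{S}\le\s{V(H)}/2$ and $\s{\partial_H(S)}<\a\s{S}$; I would delete $\partial_H(S)$ and replace $H$ by the connected components of $H[S]$ and of $H[V(H)\sm S]$, then iterate. Since both sides of a cut are nonempty, each step strictly increases the number of components, so the process halts after at most $n$ cuts; when it halts, every remaining component has edge expansion at least $\a=\e/12\log n$, which is the desired conclusion. (Note $\a<1$ when $n\ge2$, so, e.g., any singleton component and any component with a vertex of degree $1$ is never cut.)

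The crux is bounding the total number of deleted edges by $\e\s{E}$. Writing $m=\s{E}$, this number is $\sum_{\text{cuts}}\s{\partial_H(S)}<\a\sum_{\text{cuts}}\s{S}=\a\sum_{v\in V}c(v)$, where $c(v)$ counts the cuts in which $v$ lies on the \emph{small} side $S$. Each time $v$ is on the small side, the component of $v$ passes to a subgraph of $H[S]$ and hence shrinks to at most half its size; since a component never grows and has at least one vertex, $c(v)\le\log n$. Moreover, $v$ is charged only while it lies in a connected component on at least two vertices, and such a component is a connected subgraph of $G$, so $v$ is non-isolated in $G$; thus only the at most $2m$ non-isolated vertices of $G$ contribute to $\sum_v c(v)$. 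Hence $\sum_v c(v)\le2m\log n$, and the number of deleted edges is less than $\a\cdot2m\log n=\e m/6\le\e m$.

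I expect the only subtle point to be this last refinement. The crude count --- all $n$ vertices, each charged at most $\log n$ times, each charge worth less than $\a$ --- yields only the bound $\e n/12$ on the number of deleted edges, which is weaker than an $\e$-fraction of $\s{E}$ when $G$ is sparse. Restricting the count to the $\le2\s{E}$ non-isolated vertices is exactly what converts it into the required fraction of $\s{E}$. Everything else is the standard divide-and-conquer recursion on sparsest cuts, and I do not anticipate further obstacles.
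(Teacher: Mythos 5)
The paper cites this result (Trevisan's Lemma~10) without giving a proof, so there is no in-paper argument to compare against; you should be judged on whether your reconstruction is correct, and it is. Your recursive sparsest-cut decomposition is the standard proof and is essentially Trevisan's own, except that Trevisan works with conductance rather than edge expansion (as the paper notes in a footnote), which makes the charging degree-weighted and hence automatically proportional to $\s{E}$. When working with edge expansion, as you do, the observation that only non-isolated vertices of $G$ can ever be charged --- there are at most $2\s{E}$ of them, and each is charged at most $\log n$ times because its component at least halves whenever it lands on the small side --- is exactly what upgrades the naive bound $\e n/12$ to the required $\e\s{E}$, and you correctly identify this as the crux. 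The bookkeeping ($\sum_{\text{cuts}}\s{\partial_H(S)}<\a\sum_{v}c(v)\le 2\a\s{E}\log n=\e\s{E}/6$) is sound. The only blemish is the parenthetical claim that ``any component with a vertex of degree~$1$ is never cut'': this is false (a long path has degree-$1$ endpoints but admits a sparse middle cut), though the remark is inessential and the argument does not rely on it.
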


Using Theorem~\ref{theo:main} we show in Corollary \ref{coro:Trevisan} below that, already for $\e=1/2$, the expansion guarantee in Theorem~\ref{theo:Trevisan} cannot be improved much beyond $1/\log n$. In fact,
we show that the same conclusion holds even
when $\e=1-o(1)$, that is, even if one is allowed to remove all but a $o(1)$-fraction of the edges of the graph!
To this end, we will first need the following observation regarding the graphs
$G_{n,k}$ from Theorem~\ref{theo:main}.

\begin{claim}\label{claim:many_vertices}
Any subgraph of $G_{n,k}$ with average degree at least $4$ has at least $n^{1/72k^2}$ vertices.
\end{claim}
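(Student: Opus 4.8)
The plan is to deduce the claim from the classical Moore bound, after one preliminary reduction. Let $H$ be a subgraph of $G_{n,k}$ of average degree at least $4$, so that $\size{E(H)}\ge 2\,\size{V(H)}$. First I would pass to a subgraph $H'\subseteq H$ of minimum degree at least $3$, using the usual peeling argument: repeatedly delete a vertex of current degree at most $2$. Each such deletion removes one vertex and at most two edges, so if the process deleted every vertex of $H$ it would destroy at most $2\,\size{V(H)}\le\size{E(H)}$ edges, and equality would force every deleted vertex to have had degree exactly $2$ when removed — impossible for the last one, which has degree $0$. Hence the process halts at a nonempty $H'$ with $\delta(H')\ge 3$. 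The reason average degree $4$ (rather than, say, $3$) is the right hypothesis is exactly that it is the threshold buying minimum degree $3$, and not merely $2$, in a subgraph; this is what makes the ball‑growth below geometric with ratio at least $2$.

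Next, since $H'$ is still a subgraph of $G_{n,k}$ and subgraphs have girth at least that of the host graph, item $(ii)$ of Theorem~\ref{theo:main} gives $\mathrm{girth}(H')\ge g$, where $g:=\log n/(6k)^2$. Now I would run a breadth‑first search from an arbitrary $v\in V(H')$ down to depth $d:=\lfloor(g-1)/2\rfloor$. Since $H'$ has girth more than $2d$, the BFS tree has no collisions: each vertex $u$ found at depth $i<d$ contributes as \emph{new} vertices all of its neighbours except its parent — at least two of them, as $\delta(H')\ge 3$ — for otherwise one would obtain a cycle of length at most $2d<g$. Counting level by level,
\begin{equation*}
\size{V(H)}\ \ge\ \size{V(H')}\ \ge\ 1+3\sum_{i=0}^{d-1}2^{i}\ =\ 3\cdot 2^{d}-2\ \ge\ 2^{\,d+1}\ \ge\ 2^{\,g/2}\ =\ n^{1/72k^2}\,,
\end{equation*}
which is the desired bound. (The step $3\cdot 2^{d}-2\ge 2^{d+1}$ only uses $d\ge 1$, and $d+1\ge g/2$ holds for every integer $g\ge 2$; both are comfortably satisfied in the parameter range of Theorem~\ref{theo:main}, where $g=\log n/(6k)^2$ is enormous.)

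I do not expect a substantive obstacle here: the only thing to watch is the bookkeeping with the floor in $d=\lfloor(g-1)/2\rfloor$ and the trivial constants, which is why the chain of inequalities is written out explicitly above. Conceptually the entire content is the two‑step observation that average degree $4$ yields a minimum‑degree‑$3$ subgraph, on which girth $g$ forces $2^{\Omega(g)}=n^{\Omega(1/k^2)}$ vertices via the Moore bound.
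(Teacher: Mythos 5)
Your proof is correct and follows the same conceptual route as the paper's: combining the girth lower bound from item $(ii)$ of Theorem~\ref{theo:main} with the Moore bound to force many vertices. The only difference is that you reprove the Moore bound from scratch (peel to minimum degree $3$, then grow a BFS ball), whereas the paper simply cites the known fact that a $t$-vertex graph of average degree at least $4$ (indeed $3$) has girth at most $2\log t$.
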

\begin{proof}
Let $H$ be a $t$-vertex subgraph of $G$ with average degree at least $4$.
Recall the well-known fact that the girth of a $t$-vertex graph of average degree at least $4$ is at most $2\log t$ (in fact, average degree at least $3$ suffices, see~\cite{AlonHoLi02}).
Item $(ii)$ in Theorem~\ref{theo:main} thus implies $\log n/(6k)^2 \le 2\log t$,
completing the proof.
\end{proof}

\begin{coro}\label{coro:Trevisan}
Let $G=G_{n,k}$ be a graph from Theorem~\ref{theo:main}. Any subgraph of $G$ with average degree at least $4$ has a connected component with edge expansion at most $(1/\log n)(15k\log\log n)^2$.
\end{coro}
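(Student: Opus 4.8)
The plan is to localize the poor expansion to a single connected component of the given subgraph, and then apply the edge-separator bound of item~$(iii)$ of Theorem~\ref{theo:main} to that one component. Let $H$ be a subgraph of $G=G_{n,k}$ with average degree at least~$4$. Since the average degree of a graph is a weighted average (over its components, weighted by number of vertices) of the average degrees of its components, at least one connected component $C$ of $H$ must itself have average degree at least~$4$: if every component $C_i$ satisfied $2\s{E(C_i)}<4\s{V(C_i)}$, summing these inequalities over $i$ would contradict the assumption on $H$.

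Next I would run Claim~\ref{claim:many_vertices} on $C$. Writing $s:=\s{V(C)}$, the claim gives $s\ge n^{1/72k^2}$, hence $\log s\ge(\log n)/(72k^2)$, and in particular (using $k\le\frac1{648}\log\log n$) $s\ge\log n/(6k)^2$, so that item~$(iii)$ of Theorem~\ref{theo:main} applies to the subgraph $C$ of $G$. It yields an edge separator of $C$ of size $\sep(C)\le\frac{s}{\log s}(\log\log s)^2$; deleting it splits $V(C)$ into two parts of size at most $\tfrac23 s$ each, hence of size at least $\tfrac13 s$ each, with at most $\sep(C)$ edges crossing. Replacing the larger part by the smaller one if necessary, I obtain a set $T\sub V(C)$ with $\tfrac13 s\le\s{T}\le\tfrac12 s$ and $\s{\partial_C(T)}\le\sep(C)$. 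Since $T$ has size at most $\s{V(C)}/2$, it witnesses that the connected component $C$ of $H$ has edge expansion at most
\[
\phi_C(T)=\frac{\s{\partial_C(T)}}{\s{T}}\le\frac{3\sep(C)}{s}\le\frac{3(\log\log s)^2}{\log s}\le\frac{3\cdot72\,k^2(\log\log n)^2}{\log n}\le\frac{(15k\log\log n)^2}{\log n}\,,
\]
where the third inequality uses $s\le n$ and $\log s\ge(\log n)/(72k^2)$, and the last uses $3\cdot72=216\le225=15^2$. This produces exactly the connected component and the bound asserted by the corollary.

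The argument is essentially a chain of substitutions, so I do not anticipate a real obstacle; the only points that need a moment's care are (a) ensuring the witnessing set has size at most $\tfrac12 s$ rather than just $\tfrac23 s$, so it conforms to the definition of edge expansion — handled by passing to the complementary part of the separator — and (b) checking that $n^{1/72k^2}$ is already at least the threshold $\log n/(6k)^2$ required by item~$(iii)$, which is where the hypothesis $k\le\frac1{648}\log\log n$ of Theorem~\ref{theo:main} enters. Note that only the \emph{upper} bound on $\sep(C)$ is ever used, so neither connectivity of $C$ nor any lower bound on $\s{\partial_C(T)}$ plays a role beyond the fact that $C$ is a legitimate connected component of $H$.
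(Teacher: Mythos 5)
Your proof is correct and follows the paper's argument essentially verbatim: isolate a component $C$ of average degree at least $4$, apply Claim~\ref{claim:many_vertices} to lower-bound $\log s$, use item~$(iii)$ to get $\sep(C)\le(s/\log s)(\log\log s)^2$, convert this to an edge-expansion bound of $3\sep(C)/s$, and substitute. The only (minor) value you add is spelling out the $\frac13 s\le\s{T}\le\frac12 s$ bookkeeping needed to pass from a $\frac23$-separator to a witness conforming to the definition of edge expansion, a step the paper compresses into the factor $3/t$.
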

\begin{proof}
Consider a subgraph of $G$ with average degree at least $4$, and note that it has a connected component $H$ with average degree at least $4$.
Writing $t$ for the number of vertices of $H$, Claim~\ref{claim:many_vertices} implies that $\log t \ge \log n/72k^2$.
It follows from item $(iii)$ in Theorem~\ref{theo:main}
that the edge expansion of $H$ is at most $(t/\log t)(\log\log t)^2\cdot(3/t) \le (1/\log n)(15k\log\log n)^2$.
\end{proof}

By Corollary~\ref{coro:Trevisan}, even after removing a fraction of $1-4/k$ of the edges of $G_{n,k}$, the remaining graph has a connected component with edge expansion at most $O((1/\log n)(k\log\log n)^2)$.
So for example, taking $k=8$ gives a bounded-degree graph such that even after removing up to half of its edges, the remaining graph has a connected component with edge expansion at most $O((1/\log n)(\log\log n)^2)$.
At the other extreme, if $k=\log\log n/648$ then even after removing a $(1-o(1))$-fraction of the edges, the remaining graph has a connected component with edge expansion at most $(1/\log n)(\log\log n)^4$.

\subsection{Threshold-rank decomposition}\label{subsec:ABS}

Recall that in Section~\ref{subsec:Trevisan} we discussed a paradigm for solving unique games by decomposing any graph into expanding subgraphs.
The same paradigm was used in the breakthrough paper of Arora, Barak and Steurer~\cite{AroraBaSt10} which gave the first algorithm to come close to refuting the Unique Games Conjecture, solving $\UG_q(\d)$ in subexponential time $\exp(q\cdot n^{\d^{1/3}})$ (here $n$ is the number of variables).
The algorithm in~\cite{AroraBaSt10} employed the \emph{eigenspace enumeration method} where, unlike Trevisan's decomposition, the pseudorandom property which the connected components must satisfy depends on the entire spectrum of their adjacency matrix.
As in Section~\ref{subsec:Trevisan}, we prove that one cannot substantially improve the obtained decomposition.
Formal definitions follow.

For a $d$-regular graph $G$, denote by $\rank_{\tau}(G)$ the number (with multiplicities) of eigenvalues $\lm$ of the
adjacency matrix of $G$ satisfying $\s{\lm} > \tau d$.
Let $R_n(\eta,\e)$ denote the minimum integer such that for any $n$-vertex 
graph $G$, one can remove at most an $\e$-fraction of its edges so that each connected component $H$ of the new graph satisfies $\rank_{1-\eta}(H^*) \le R_n(\eta,\e)$, where $H^*$ is obtained from $H$ by adding self-loops to make it $\D(G)$-regular.
The main result of \cite{AroraBaSt10} can be summarized as follows.
\begin{theo}[\cite{Steurer10}, Theorem~5.6]\label{theo:ABS}
There are absolute constants $c,c' > 0$ such that:
\begin{enumerate}
\item For every $0<\eta,\e \le 1$, 
$$R_n(\eta,\e) \le n^{c(\eta/\e^2)^{1/3}} \;.$$
Moreover, a decomposition achieving this bound can be found in polynomial time.\footnote{So for example, for every $d$-regular $n$-vertex graph $G$, one can efficiently 
remove at most (say) $1\%$ of the edges of $G$ so that for each connected component in the resulting graph, its ``regularized'' adjacency matrix has at most $n^{O(\eta^{1/3})}$ eigenvalues larger than $(1-\eta)d$.}
\item $\UG_q(\d)$ can be solved in time $\exp(\,q \cdot R_n(O(\d),c')\,)$.
\end{enumerate}
\end{theo}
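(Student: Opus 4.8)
Since this statement is quoted verbatim from \cite{Steurer10} (where it distills the Arora--Barak--Steurer algorithm), in this paper it is invoked as a black box; for completeness let me sketch how one would prove it. The plan is to establish part (i) --- the decomposition --- by a recursive \emph{peeling} argument, and part (ii) --- the unique games algorithm --- by \emph{eigenspace enumeration} over the low-threshold-rank pieces produced in part (i).

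For part (i) the engine is a local-Cheeger-type lemma: if $H$ is a subgraph of $G$ and the $\D(G)$-regularized normalized adjacency operator of $H$ has more than $r$ eigenvalues exceeding $1-\eta$ in absolute value, with $r>n^{c(\eta/\e^2)^{1/3}}$, then $H$ contains a vertex set $S$ of small measure whose conductance is below $\e$. Granting this, I would run the obvious algorithm: if $\rank_{1-\eta}(H^*)\le n^{c(\eta/\e^2)^{1/3}}$, stop; otherwise extract the guaranteed low-conductance set $S$, delete the edges $\partial_H(S)$, and recurse on $H[S]$ and $H[V(H)\sm S]$. To bound the number of deleted edges I would charge each extracted cut to the smaller of its two sides and sum geometrically, using that every cut has conductance at most $\e$ to conclude the total is at most $\e|E|$; termination is immediate since threshold rank is monotone under vertex deletion, so once a part drops below the target it stays there. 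Polynomial running time follows because the set $S$ in the lemma can be produced constructively --- e.g.\ from the level sets of a short random walk, or by rounding the natural SDP relaxation.

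The crux is the local-Cheeger lemma with the correct cube-root trade-off, and this is the step I expect to be the main obstacle. The route I would take passes through the trace of a power of the operator: with $A$ the $\D(G)$-regularized normalized adjacency operator of $H$ and $\l\approx 1/\eta$, one has $\operatorname{tr}(A^{2\l})\ge \rank_{1-\eta}(H^*)\cdot(1-\eta)^{2\l}=\Omega(\rank_{1-\eta}(H^*))$, and since $\operatorname{tr}(A^{2\l})$ equals, up to normalization, the sum over vertices $v$ of the collision probability of the $\l$-step lazy walk started at $v$, some $v$ has an $\l$-step walk far from stationary. A Lov\'asz--Simonovits-style rounding of the level sets of that walk distribution then yields a set $S$ of measure roughly the reciprocal of the collision probability and conductance $O(\sqrt\eta)$ up to a polylogarithmic factor. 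Balancing the three competing quantities --- the walk length $\l\sim 1/\eta$, the demand that the conductance drop below $\e$, and the resulting measure weighed against the charging bound --- is the one genuinely delicate computation, and it is precisely what forces $\rank_{1-\eta}(H^*)\le n^{O((\eta/\e^2)^{1/3})}$ and produces the exponent $1/3$.

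For part (ii) I would run the decomposition of part (i) with $\eta=O(\d)$ and $\e=c'$ and solve each connected component $H$ separately. An assignment satisfying a $(1-\d)$-fraction of the equations of a unique game, encoded as its $\{0,1\}$-valued indicator function on $V(H)\times\mathbb{Z}_q$, has almost all of its $\ell_2$ energy inside the span of the top eigenvectors of the label-extended graph of $H$, a subspace of dimension $O(q\cdot\rank_{1-\eta}(H^*))$; I would enumerate a sufficiently fine net of that subspace --- of size $\exp(q\cdot R_n(O(\d),c'))$ --- round each net vector to an assignment, test it, and answer ``satisfiable'' iff some candidate satisfies a large fraction. Correctness reduces to the claim that the net contains a vector close to the planted solution, a routine net/rounding estimate once the energy-concentration statement is in hand. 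Every step here other than the quantitative local-Cheeger inequality of part (i) is either bookkeeping (the charging argument, termination) or a standard approximation argument.
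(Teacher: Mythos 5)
Theorem~\ref{theo:ABS} is not proved in this paper: it is imported verbatim from~\cite{Steurer10} (Theorem~5.6, which distills~\cite{AroraBaSt10}) and serves only to contextualize the lower bound on $R_n(\eta,\e)$ established in Corollary~\ref{coro:ABS-R}. You correctly recognize this, and your sketch of the external proof---a local-Cheeger lemma derived from $\mathrm{tr}(A^{2\ell})$ with $\ell\sim 1/\eta$ together with Lov\'asz--Simonovits rounding of walk level sets, a recursive peeling scheme with geometric charging for part~(i), and eigenspace enumeration over the label-extended graph for part~(ii)---is a faithful high-level account of what~\cite{AroraBaSt10} and~\cite{Steurer10} actually do, including the correct origin of the exponent~$1/3$.
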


The first step in the algorithm of~\cite{AroraBaSt10} (item $(i)$ in Theorem~\ref{theo:ABS} which, as the authors themselves write, is the main component of the algorithm) is the efficient decomposition of any graph, by removing few of its edges, into vertex-disjoint subgraphs of small threshold rank. This decomposition is applied on the graph underlying the input unique game, and then the problem is solved on each connected component.
The second step (item $(ii)$ in Theorem~\ref{theo:ABS}) follows by applying, for each connected component, a brute force enumeration over the eigensapces corresponding to its large eigenvalues.
The second step is inspired by works of Kolla and Tulsiani~\cite{AroraKhKoStTuVi08} and Kolla~\cite{Kolla10}, who previously
used eigenspace enumeration to solve unique games on Cayley graphs.
Combining the two items of Theorem~\ref{theo:ABS} shows that the algorithm of~\cite{AroraBaSt10} runs in time $\exp(q\cdot n^{O(\d^{1/3})})$.

Note that in order to obtain a faster algorithm using the eigenspace enumeration method, one must improve the bound on $R_n(\eta,\e)$ in Theorem~\ref{theo:ABS}.
Up to the present work it was conceivable\footnote{The main tool used in~\cite{AroraBaSt10} in order to prove Theorem~\ref{theo:ABS} is a higher-order Cheeger-type inequality. It was recently shown in~\cite{BarakGoHaMeRaSt12} that this inequality is essentially best possible, but to the best of our knowledge, this does not rule out the possibility of improving Theorem~\ref{theo:ABS} by other means.} that for example $R_n(\eta,\e) \le (\log n)^{\poly(\eta)}$, say for $\e \ge \e_0$. Such a decomposition would have implied a quasipolynomial time algorithm for $\UG_q(\d)$, practically refuting\footnote{Say, under the
so-called {\em Exponential Time Hypothesis}.} the Unique Games Conjecture.

As we prove below, the bound on $R_n(\eta,\e)$ in Theorem~\ref{theo:ABS} is essentially tight. Namely, we show that one can derive from Theorem~\ref{theo:SSE} a nearly polynomial lower bound on $R_n(\eta,\e)$.
This essentially eliminates the possibility of disproving the Unique Games Conjecture using the eigenspace enumeration approach of~\cite{AroraBaSt10}. See Section~\ref{sec:conc} for more details on this.
We note that our lower bound applies even when $\e=1-o(1)$, that is, even when one is allowed to remove all but a $o(1)$-fraction of the graph's edges.
Naturally, the bound holds regardless of the running time of the decomposition algorithm.

\begin{coro}\label{coro:ABS-R}
For every $n \ge n_0$, $0<\eta \le 1$, and $\e \le 1-3000/\log\log\log n$, we have
$$R_n(\eta,\e) \ge n^{\eta/(\log\log n)^3} \;.$$
\end{coro}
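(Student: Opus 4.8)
The plan is to exhibit a single witness graph and reduce to Theorem~\ref{theo:SSE} via the standard principle that many pairwise-disjoint non-expanding sets force large threshold rank. Set $k:=\floor{\log\log\log n/648}$, so that $2\le k\le\frac{1}{648}\log\log\log n$ for all $n\ge n_0$, and take $G=G_{n,k}$ from Theorem~\ref{theo:main}; write $\Delta:=\D(G)$, so $k\le\Delta\le 6k$. First I would isolate the spectral--combinatorial lemma: \emph{if $H$ has $\D(H)\le\Delta$, if $H^*$ is obtained from $H$ by adding self-loops to make it $\Delta$-regular, and if $A_1,\dots,A_m\sub V(H)$ are pairwise disjoint and nonempty with $\phi_H(A_i)<\eta\Delta/3$ for every $i$, then $\rank_{1-\eta}(H^*)\ge m$.} This is immediate from Courant--Fischer applied to the unnormalized Laplacian $L^*=\Delta I-A(H^*)$: a self-loop contributes nothing to the Laplacian quadratic form, so $\langle \mathbf{1}_{A_i},L^*\mathbf{1}_{A_i}\rangle=\s{\partial_H(A_i)}$, and an elementary estimate using disjointness of supports together with $(a-b)^2\le 2a^2+2b^2$ gives $\langle x,L^*x\rangle\le 3\max_i\phi_H(A_i)\cdot\norm{x}^2$ for every $x$ in the $m$-dimensional span of the (orthogonal) vectors $\mathbf{1}_{A_i}$; hence $L^*$ has at least $m$ eigenvalues smaller than $\eta\Delta$, equivalently $A(H^*)$ has at least $m$ eigenvalues larger than $(1-\eta)\Delta$.

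Next, fix $\e\le 1-3000/\log\log\log n$ and an arbitrary removal of at most an $\e$-fraction of $E(G)$. Since $G$ has at least $kn/2$ edges and $(1-\e)k\ge\frac{3000}{\log\log\log n}\bigl(\frac{\log\log\log n}{648}-1\bigr)\ge 4$ for $n\ge n_0$, the remaining graph keeps at least $2n$ edges, so some connected component $H$ has average degree at least $4$. By item~$(ii)$ of Theorem~\ref{theo:main} and the fact, used in Claim~\ref{claim:many_vertices}, that average degree at least $4$ forces a cycle of length at most $2\log t$, the number $t$ of vertices of $H$ satisfies $\log t\ge\log n/(72k^2)$, and in particular $t\ge\log n/(6k)^2$, so Theorem~\ref{theo:SSE} applies to $H$. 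I would apply it with $\mu:=\min\{2/3,\;2^{-L/2}\}$, where $L:=\dfrac{\eta k\log t}{3(14\log\log t)^2}$; then $\log(1/\mu)<L$, so — using $\Delta\ge k$ — each of the $\ceil{1/4\mu}$ sets $A_i$ it produces has $\phi_H(A_i)\le\dfrac{\log(1/\mu)}{\log t}(14\log\log t)^2<\eta\Delta/3$, and the lemma yields $R_n(\eta,\e)\ge\rank_{1-\eta}(H^*)\ge\dfrac{1}{4\mu}$.

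It remains to compare $\tfrac{1}{4\mu}$ with the target $n^{\eta/(\log\log n)^3}$. In the main range $\eta\ge(\log\log n)^3/\log n$, substituting $\log t\ge\log n/(72k^2)$, $\log\log t\le\log\log n$ and $k\le\log\log\log n/648$ into $L$, and using $k(\log\log n)^2=o((\log\log n)^3)$ (since $\log\log\log n=o(\log\log n)$), one gets $L\to\infty$; hence $\mu=2^{-L/2}$ and $\tfrac{1}{4\mu}=2^{L/2-2}$, and a direct computation shows $L/2-2\ge\eta\log n/(\log\log n)^3$ — indeed the left side exceeds the right by a factor tending to infinity — so $R_n(\eta,\e)\ge n^{\eta/(\log\log n)^3}$. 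When $\eta<(\log\log n)^3/\log n$ the target is below $n^{1/\log n}=2$, so the claim reduces to $R_n(\eta,\e)\ge 2$; this is obtained in the same spirit, by applying Theorem~\ref{theo:SSE} with $\mu=1/2$ to exhibit a single set of measure at most $1/2$ and edge expansion $O((\log\log t)^2/\log t)$ in $H$ and invoking Cheeger's inequality to deduce $\lambda_2(H^*)>(1-\eta)\Delta$. I expect the main obstacle to be not any single hard step but the parameter bookkeeping in the last paragraph: one has to choose the density parameter $k$ of the witness graph and the measure $\mu$ so that the constraint $\phi_H(A_i)<\eta\Delta/3$, the admissible range $1/t\le\mu\le 2/3$ of Theorem~\ref{theo:SSE}, and the desired exponent $\eta/(\log\log n)^3$ all hold at once — which is exactly why the witness needs $k\asymp\log\log\log n$, and why the girth bound $\log t\gtrsim\log n/k^2$ enters crucially.
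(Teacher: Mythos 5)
Your argument is, in all essentials, the paper's own proof: same witness graph $G_{n,k}$ with $k\asymp\log\log\log n$, same observation that after removing at most an $\e$-fraction of edges some connected component $H$ of average degree at least $4$ survives with $\log t\gtrsim\log n/k^2$ (Claim~\ref{claim:many_vertices}), same application of Theorem~\ref{theo:SSE} to produce many pairwise-disjoint low-expansion sets, and the same use of the easy higher-order Cheeger bound (the paper's Lemma~\ref{lemma:Cheeger}, stated there with constant $2$ in place of your $3$, and with the same observation that self-loops contribute nothing to the Laplacian quadratic form) to turn these into eigenvalues of $H^*$ above $(1-\eta)\D(G)$. Your $\mu=2^{-L/2}$ is a harmless reparametrization of the paper's $\mu=t^{-\eta k/(20\log\log t)^2}$, and the bookkeeping in the main range $\eta\ge(\log\log n)^3/\log n$ is correct.

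The one step that does not work as written is your small-$\eta$ branch. You take $\mu=1/2$ in Theorem~\ref{theo:SSE}, obtaining a set $A_1$ with $\phi_H(A_1)\le\Theta\!\bigl((\log\log t)^2/\log t\bigr)$, and then invoke Cheeger to conclude $\lambda_2(H^*)>(1-\eta)\D$. But Cheeger only gives $\D-\lambda_2\le 2\phi_H(A_1)$, so you need $\phi_H(A_1)<\eta\D/2$; with $\D\le 6k\asymp\log\log\log n$ this requires $\eta\gtrsim (\log\log t)^2/(\D\log t)\asymp(\log\log n)^2\log\log\log n/\log n$, and below this threshold (a range still inside your case $\eta<(\log\log n)^3/\log n$) the conclusion $\lambda_2(H^*)>(1-\eta)\D$ simply does not follow; pushing $\mu$ toward $2/3$ only improves $\log(1/\mu)$ to a constant and does not help. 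To be fair, the paper's own treatment of this regime (the branch $\mu>2/3$, where it declares~(\ref{eq:tr-component}) ``trivially holds'' and then chains to the corollary via $\tfrac14 n^{\eta/k(200\log\log n)^2}\ge n^{\eta/(\log\log n)^3}$) has the same lacuna, since that inequality fails once the left side drops below $1$. A clean fix in either write-up is to note that when $\eta\lesssim(\log\log n)^2\log\log\log n/\log n$ the claimed bound is $1+o(1)$ and the statement is vacuous against the trivial $R_n\ge 1$, and to run the Cheeger argument only above that threshold.
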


For the proof of Corollary~\ref{coro:ABS-R} we will need the ``easy'' direction of the higher-order Cheeger inequality
that first appeared in~\cite{LeeOGTr12}. For completeness, we give a short and simple proof in Section~\ref{sec:add}.

\begin{lemma}\label{lemma:Cheeger}
Let $G=(V,E)$ be a $d$-regular (multi-)\footnote{By this we mean that we allow self-loops.}graph, and let $\lm_1\ge\lm_2\ge\cdots\ge\lm_{\s{V}}$ be the eigenvalues of the adjacency matrix of $G$. Then for every $k$,
$$\frac{d-\lm_k}{2} \le \min_{S_1,\ldots,S_k} \max_{1\le i\le k} \phi_G(S_i) \;,$$
where the minimum is over all collections of $k$ mutually disjoint, non-empty  subsets of $V$.
\end{lemma}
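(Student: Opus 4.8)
The plan is to apply the Courant--Fischer min--max characterization of the $k$-th largest eigenvalue $\lm_k$, using the indicator vectors of the sets $S_1,\dots,S_k$ as a test subspace. First I would fix any collection of $k$ mutually disjoint nonempty subsets $S_1,\dots,S_k$ of $V$ (if no such collection exists, i.e.\ $k>\s{V}$, the right-hand side is $+\infty$ and there is nothing to prove), and for each $i$ let $v_i\in\R^V$ be the indicator vector of $S_i$. Since the $S_i$ are nonempty and pairwise disjoint, the $v_i$ have disjoint nonempty supports, hence are linearly independent, so $W:=\spanV\{v_1,\dots,v_k\}$ has dimension exactly $k$. By Courant--Fischer,
\[
\lm_k \;=\; \max_{\dim W'=k}\ \min_{0\ne x\in W'}\ \frac{x^{\top}Ax}{x^{\top}x}\;\ge\;\min_{0\ne x\in W}\frac{x^{\top}Ax}{x^{\top}x}\,,
\]
so it suffices to show that every $0\ne x\in W$ satisfies $x^{\top}Ax\ge\bigl(d-2\max_i\phi_G(S_i)\bigr)\norm{x}^2$; rearranging the resulting bound $\lm_k\ge d-2\max_i\phi_G(S_i)$ and then taking the minimum over all collections $S_1,\dots,S_k$ yields the lemma.

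To prove this inequality I would write $x=\sum_{i=1}^k c_i v_i$; because the supports are disjoint we have $x_u=c_i$ for $u\in S_i$, $x_u=0$ otherwise, and $\norm{x}^2=\sum_i c_i^2\s{S_i}$. Using $d$-regularity and the standard identity for the Laplacian quadratic form (valid for multigraphs, since each self-loop contributes $(x_u-x_u)^2=0$ while also cancelling its contribution to the degree), we have $d\norm{x}^2-x^{\top}Ax=\sum_{uv\in E}(x_u-x_v)^2$. An edge lying inside some $S_i$ or outside $\bigcup_i S_i$ contributes $0$; for an edge $uv$ with $u\in S_i$ and $v\notin S_i$ I would bound $(x_u-x_v)^2\le 2x_u^2+2x_v^2$ and charge the term $2x_u^2=2c_i^2$ to $S_i$ (and, if $v$ lies in some $S_j$, charge $2x_v^2=2c_j^2$ to $S_j$). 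Each edge of the boundary $\partial_G(S_i)$ is then charged at most $2c_i^2$ on account of $S_i$, so
\[
\sum_{uv\in E}(x_u-x_v)^2\;\le\;\sum_i 2c_i^2\,\s{\partial_G(S_i)}\;=\;\sum_i 2c_i^2\,\phi_G(S_i)\,\s{S_i}\;\le\;2\Bigl(\max_i\phi_G(S_i)\Bigr)\norm{x}^2\,,
\]
and rearranging gives exactly $x^{\top}Ax\ge\bigl(d-2\max_i\phi_G(S_i)\bigr)\norm{x}^2$, as needed.

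The only step that needs genuine care is the edge-counting in the second paragraph: one must apply $(a-b)^2\le 2a^2+2b^2$ so that each boundary edge of $S_i$ is charged to $S_i$ exactly once and with the correct constant, and one should double-check that the self-loops introduced to make the graph $d$-regular really do drop out of the identity $d\norm{x}^2-x^{\top}Ax=\sum_{uv\in E}(x_u-x_v)^2$, so that it applies to $G$ exactly as stated. Everything else is a direct invocation of Courant--Fischer, so I expect the proof to be short.
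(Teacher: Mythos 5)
Your proof is correct and is essentially the same argument as the paper's: both proofs apply Courant--Fischer to the $k$-dimensional span of the indicator vectors $\mathbf{1}_{S_1},\ldots,\mathbf{1}_{S_k}$ and then bound the Laplacian quadratic form using the inequality $-2c_ic_j \le c_i^2+c_j^2$ (which is the same as your $(c_i-c_j)^2\le 2c_i^2+2c_j^2$). The only cosmetic difference is that the paper works directly with $L=dI-A$ and expands $x^{\top}Lx$ via the Gram entries $\mathbf{1}_{S_i}^{\top}L\mathbf{1}_{S_j}$, whereas you phrase it as the edge-sum $\sum_{uv\in E}(x_u-x_v)^2$ and charge boundary edges to the sets; these are the same computation.
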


\begin{proof}[Proof of Corollary~\ref{coro:ABS-R}]
Let $G=G_{n,k}$ be a graph from Theorem~\ref{theo:main}.
We will show that any subgraph of $G$ with average degree at least $4$ has a connected component $H$ satisfying
\begin{equation}\label{eq:tr-component}
\rank_{1-\eta}(H^*) \ge \frac{1}{4} n^{\eta/k(200\log\log n)^2} \;.
\end{equation}
Setting $k=\log\log\log n/648$, this would mean that removing an $\e$-fraction of the edges of $G$, even with $\e$ as large as $1-4/k \ge 1-3000/\log\log\log n$,  leaves a connected component $H$ that, for every $n\ge n_0$, satisfies $\rank_{1-\eta}(H^*) \ge n^{\eta/(\log\log n)^3}$. This would imply the desired lower bound on $R_n(\eta,\e)$.

Consider a subgraph of $G$ with average degree at least $4$ and note that it has a connected component $H$ with average degree at least $4$.
Writing $t$ for the number of vertices of $H$, Claim~\ref{claim:many_vertices} implies that $t \ge n^{1/72k^2}$ ($\ge \log n/(6k)^2$).
Apply Theorem~\ref{theo:SSE} on $H$ with $\mu=1/t^{\eta k/(20\log\log t)^2}$.
Note that, as required by Theorem~\ref{theo:SSE}, we may assume $\mu \ge 1/t$ (as one may verify, recalling $\eta \le 1$) and $\mu \le 2/3$ (as otherwise $n^{\eta/k(200\log\log n)^2}\le 1/\mu \le 3/2$, so~(\ref{eq:tr-component}) trivially holds).
We thus obtain
$$b:= \frac{1}{4\mu} = \frac14 t^{\eta k/(20\log\log t)^2} \ge \frac{1}{4} n^{\eta/k(200\log\log n)^2}$$
mutually disjoint and non-empty subsets of $H$ that each have edge expansion in $H$ at most $(\log(1/\mu)/\log t)(14\log\log t)^2 < \eta k/2$. Let $\lm_1\ge\lm_2\ge\cdots\ge\lm_{t}$ be the eigenvalues of the adjacency matrix of $H^*$ (recall that $H^*$ is obtained by adding to $H$ sufficiently many self-loops so as to make it $\D(G)$-regular). Note that adding self-loops does not alter the expansion of any subset. Therefore, Lemma~\ref{lemma:Cheeger} implies that $\D(G)-\lm_b < \eta k$. It follows that for every $1 \le i \le b$ we have $\lm_i \ge \lm_b > \D(G)-\eta k \ge (1-\eta)\D(G)$. This shows that $\rank_{1-\eta}(H^*) \ge b$, as required in~(\ref{eq:tr-component}), and therefore completes the proof.
\end{proof}

\subsection{Finding a single vertex-expanding subgraph}\label{subsec:SS}

For a graph $G$, a subset $S \sub V(G)$ is said to have \emph{vertex expansion} $\a$ if $|N(S)| = \a\s{S}$, where $N(S)$ denotes the set of vertices outside of $S$ that have a neighbor in $S$.
Motivated by certain extremal problems related to graph minors, it was shown in~\cite{ShapiraSu08} that every graph $G$ contains a subgraph $H$, such that $H$ has good vertex expansion properties and almost the same average degree as $G$.

\begin{theo}[\cite{ShapiraSu08}, Lemma~1.2]\label{theo:SS}
Let $0<\e\le 2^{-8}$. Every graph $G$ of average degree $k$ contains a $t$-vertex subgraph $H$ with average degree at least $(1-\e)k$ such that every subset of $V(H)$ of size $\mu t$ with $1/t \le \mu \le 1/2$ has vertex expansion in $H$ at least
$\e\cdot\log(1/\mu)/\log t(2\log\log t)^2$.
\end{theo}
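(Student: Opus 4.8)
I would produce $H$ by a greedy ``peeling'' process. Maintain an induced subgraph $H\sub G$, starting from $H=G$, together with the invariant that $H$ has average degree $d_H:=2\s{E(H)}/\s{V(H)}$ at least $(1-\e)k$. As long as $H$ violates the conclusion there is, by definition, a set $S\sub V(H)$ with $\s S=\mu t$ (where $t=\s{V(H)}$ and $1/t\le\mu\le 1/2$) and $\s{N_H(S)}<\a\s S$, where $\a:=\e\log(1/\mu)/\big(\log t\,(2\log\log t)^2\big)$; I then replace $H$ by one of two smaller induced subgraphs, chosen to preserve the invariant. Each step deletes at least one vertex, so the process halts, and necessarily at an $H$ satisfying the conclusion. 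The whole point is to control the total loss of average degree along the way.

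\textbf{The two moves.} Write $B=N_H(S)$ and $R=V(H)\sm(S\cup B)$, so there are no $S$--$R$ edges; let $e_S$ count the edges of $H$ meeting $S$, and $e_B$ the edges inside $B$, so that $\s{E(H)}=e_S+e_R+e_B$ for some $e_R\ge 0$ and $e_B\le\binom{\s B}{2}$. If $e_S/\s S\le\s{E(H)}/t$ (``$S$ is not denser than $H$''), delete $S$: a one-line computation gives $d_{H[R\cup B]}=2(\s{E(H)}-e_S)/(t-\s S)\ge d_H$, so the average degree does not drop. Otherwise delete $R$: since $e_S/\s S>d_H/2$ and $\s B<\a\s S$, the survivor $H[S\cup B]$ has average degree $\ge 2e_S/\big(\s S(1+\a)\big)>d_H/(1+\a)$ and at most $\s S(1+\a)\le(1+\a)t/2<0.51\,t$ vertices (and $\s R\ge 1$, so it is a genuine step). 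Call these $a$-steps and $b$-steps: $a$-steps are free, and each $b$-step costs a factor $1+\a$ in average degree while at least halving the vertex count.

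\textbf{Key estimate.} Only $b$-steps matter; list them $j=1,\dots,r$, let $t_j$ be the vertex count of $H$ entering the $j$-th such step, and put $s_j=\log t_j$. Then $t_{j+1}<0.51\,t_j$ (a $b$-step halves, later $a$-steps only shrink), so $s_j-s_{j+1}>0.97$; and since the $b$-step alone leaves $<2\mu_j t_j$ vertices we get $\mu_j>t_{j+1}/(2t_j)$, hence $\log(1/\mu_j)\le 1+s_j-s_{j+1}=O(s_j-s_{j+1})$. Consequently the total degree loss satisfies
\[
\prod_{j=1}^{r}(1+\a_j)\le\exp\Big(\textstyle\sum_j\a_j\Big),\qquad \sum_j\a_j=O(\e)\sum_j\frac{s_j-s_{j+1}}{s_j(\log s_j)^2}\le O(\e)\!\int\frac{ds}{s(\log s)^2}\,.
\]
The crucial fact is that $\int^\infty\!\frac{ds}{s(\log s)^2}$ \emph{converges} (substitute $u=\log s$), so $\sum_j\a_j$ is at most an absolute constant times $\e$; this is exactly why the statement carries a $(\log\log t)^2$ factor rather than just $\log\log t$, since with one fewer logarithm one would instead face the divergent $\int\frac{ds}{s\log s}$. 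As the conclusion is trivial when $t$ is below an absolute constant (there $H$ is essentially complete, which also bounds the lower endpoint of the integral away from $1$, hence the integral by a small constant), the constants can be tracked so that $\sum_j\a_j\le\e$; then $\prod_j(1+\a_j)\le e^{\e}\le 1/(1-\e)$ and the final $H$ has average degree $\ge(1-\e)k$, completing the proof.

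\textbf{Main difficulty.} The skeleton is short; the work is in the accounting. The one genuinely essential idea is the ``telescoping Riemann sum'' $\sum_j\frac{s_j-s_{j+1}}{s_j(\log s_j)^2}\approx\int\frac{ds}{s(\log s)^2}<\infty$, which is the whole reason the double-logarithmic correction in such expansion bounds is unavoidable. The fiddly parts are arranging the $a$/$b$ dichotomy so that $a$-steps are exactly free, and pinning down the absolute constants (and the trivial small-$t$ range) tightly enough to recover the precise factor $\e$ and exponent $2$ on $\log\log t$ claimed.
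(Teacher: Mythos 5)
Note first that the paper itself does not prove this statement; it is cited from~\cite{ShapiraSu08} with no proof supplied, so there is no internal argument to compare against and I assess your attempt on its own merits. Your overall engine is the right one and, as far as I can tell, is the same one driving the cited result: peel to induced subgraphs, split according to whether the non-expanding set $S$ is denser than $H$, make the $a$-steps free, let a $b$-step cost a multiplicative $(1+\a)$ in density while multiplying the vertex count by at most $0.51$, and then bound $\sum_j\a_j$ via $\log(1/\mu_j)=O(s_j-s_{j+1})$ and the convergence of $\int ds/\bigl(s(\log s)^2\bigr)$ --- this convergence is indeed the reason the $(\log\log t)^2$ correction is there. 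The local computations you give (the $a$-step preserves average degree; the $b$-step leaves average degree $\ge d_H/(1+\a)$ on at most $(1+\a)\s{S}\le 0.51\,t$ vertices; $e^{\e}\le 1/(1-\e)$) are all correct.

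There is, however, a genuine gap at the small-$t$ endpoint, which you wave away with ``the conclusion is trivial when $t$ is below an absolute constant.'' That is not correct, and the circularity hiding behind it is the real missing piece. At $t=2$ we have $\log\log t=\log_2 1=0$, so the demanded expansion $\e\log(1/\mu)/\bigl(\log t\,(2\log\log t)^2\bigr)$ is infinite and the conclusion cannot hold; your process can also actually reach $t=2$ and then stall (if $H$ is a single edge then $S=\{v\}$, $B=\{u\}$, $R=\emptyset$, and the $b$-branch deletes nothing). More to the point, ``the lower endpoint of the integral is bounded away from $1$'' is precisely what has to be \emph{proved}: the bound $\sum_j\a_j\le\e$ is what guarantees the process stops at a good $H$, but that bound itself requires knowing the terminal $t$ is not too small. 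To close the argument you need a separate input that caps the integral from below --- for example a lower bound on the terminal $t$ forced by the density invariant (immediate when $k$ is large, since $t\ge(1-\e)k+1$ for simple graphs, but not in general), or a direct check that any $H$ the process could produce with $t$ below an absolute threshold already satisfies the conclusion for $t\ge 3$ together with a proof that $t=2$ is never reached. Without some such statement the induction does not close, so this is not just bookkeeping; it is the one place the write-up has to be strengthened.
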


Theorem~\ref{theo:SSE}, together with item~(ii) of Theorem~\ref{theo:main} and Fact~\ref{fact:treeSep}, yields the following corollary, which shows that the expansion guarantee in Theorem~\ref{theo:SS} above is essentially best possible.

\begin{coro}\label{coro:SS}
Let $G$ be a graph from Theorem~\ref{theo:SSE}, and let $H$ be a $t$-vertex subgraph of $G$. Then for every $1/t \le \mu \le 2/3$,
there is a subset of $V(H)$ of size in $[\mu t/3,\mu t]$ whose vertex expansion in $H$ is at most $(\log (1/\mu)/\log t)\cdot(14\log\log t)^2$.
\end{coro}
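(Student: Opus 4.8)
The plan is to derive Corollary~\ref{coro:SS} directly from Theorem~\ref{theo:SSE}, using the observation that for a graph $G$ of bounded maximum degree, vertex expansion of a set is controlled by its edge expansion. Concretely, since $G=G_{n,k}$ has maximum degree at most $6k$ by item~$(i)$ of Theorem~\ref{theo:main}, any subset $A\subseteq V(H)$ satisfies $\s{\Nh_H(A)} \le \s{\partial_H(A)}$: each vertex of $\Nh_H(A)$ has at least one edge of $\partial_H(A)$ incident to it. Hence the vertex expansion $\s{\Nh_H(A)}/\s{A}$ is at most the edge expansion $\phi_H(A)$. So it suffices to produce a single set $A$ of the prescribed size with small edge expansion.

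First I would handle the case $t \ge \log n/(6k)^2$: here Theorem~\ref{theo:SSE} applies (note $G$ is a graph from Theorem~\ref{theo:main} with $2 \le k \le \frac{1}{648}\log\log\log n$, exactly the hypothesis of Theorem~\ref{theo:SSE}), and for the given $\mu$ with $1/t \le \mu \le 2/3$ it yields $\ceil{1/4\mu} \ge 1$ mutually disjoint subsets $A_i$ with $\mu t/3 \le \s{A_i} \le \mu t$ and $\phi_H(A_i) \le (\log(1/\mu)/\log t)(14\log\log t)^2$. Picking any one of them and applying the edge-to-vertex expansion bound above gives the conclusion.

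Next I would dispose of the case $t < \log n/(6k)^2$. By item~$(ii)$ of Theorem~\ref{theo:main}, $G$ has girth at least $\log n/(6k)^2 > t$, so $H$, being a subgraph on $t$ vertices, contains no cycle and is therefore a forest. By Fact~\ref{fact:treeSep} (applied componentwise) $H$ has a vertex separator of size at most the number of components, so in particular it has an independent-ish structure that lets us find a small set with no boundary at all — more carefully, a forest on $t$ vertices with $\mu t \ge 1$ has a subtree (or union of subtrees) of any prescribed size $s$ with at most one edge leaving it, so taking such a set $A$ of size in $[\mu t/3,\mu t]$ gives $\s{\Nh_H(A)} \le 1 \le \mu t/3 \cdot (\text{small}) $; since $\log(1/\mu)/\log t \cdot (14\log\log t)^2 \ge 3/(\mu t)$ whenever $\mu t \ge 1$ and $t$ is large (one checks $\log(1/\mu)\ge 1$ is not guaranteed, so instead bound $\s{\Nh_H(A)}/\s{A} \le 1/(\mu t/3) = 3/(\mu t) \le 3$, and compare with the target, which is at least a constant for the relevant range), the trivial bound already suffices. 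I would write this more cleanly by simply noting: for a forest one can always split off a set of size in $[\mu t /3, \mu t]$ bounded by a single vertex of $N_H$, and the target expression dominates $3/(\mu t)$ in the regime $1/t\le\mu$, $t\ge$ some constant — all of which is routine.

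The main obstacle, such as it is, is purely bookkeeping: matching the size window $[\mu t/3,\mu t]$ and the exact constant $(14\log\log t)^2$ coming out of Theorem~\ref{theo:SSE} to what is claimed in Corollary~\ref{coro:SS} (they coincide), and verifying the trivial forest case lands inside the claimed bound rather than needing an extra $\log\log$ factor. I do not anticipate any genuine difficulty, since the edge-to-vertex passage loses nothing and the size and expansion guarantees are inherited verbatim from Theorem~\ref{theo:SSE}; the only care needed is ensuring $n$ (hence $t$) is large enough for the small-$t$ estimate, which is why the statement is phrased for graphs $G$ from Theorem~\ref{theo:SSE} (implicitly for $n\ge n_0$).
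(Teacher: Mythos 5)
Your proposal follows exactly the paper's (implicit) route: invoke Theorem~\ref{theo:SSE} when $t\ge\log n/(6k)^2$, pass from edge expansion to vertex expansion via $\s{\Nh_H(A)}\le\s{\partial_H(A)}$ (which, as you could have noted, needs no degree bound at all), and dispatch $t<\log n/(6k)^2$ using the girth bound (item $(ii)$) and Fact~\ref{fact:treeSep}. One small caveat: your stated lemma that a forest always admits a set of any prescribed exact size with at most one boundary edge (or one boundary vertex) is not literally true -- e.g.\ $K_{1,3}$ with $s=2$, or the spider with three legs of length $2$ and $s=3$ -- but the slack in the size window $[\mu t/3,\mu t]$ together with the generous $(14\log\log t)^2$ factor makes the forest case go through (it is safest to run the same iterative splitting as in the first part of the proof of Theorem~\ref{theo:SSE}, now with $\sepV\le 1$ at every level), so this does not constitute a genuine gap.
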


\subsection{Hyperfinite families of graphs}

A graph is said to be \emph{$(\e,q)$-hyperfinite} if one can remove an $\e$-fraction of its edges and thus decompose it into connected components of size at most $q$ each. A family of graphs is said to be \emph{hyperfinite} if there is a function $q$ such that for every $\e >0$, every graph in the family is $(\e,q(\e))$-hyperfinite. Hyperfinite families of graphs have been extensively studied in recent years, mainly because of their role in the
theory of graph limits of sparse graphs (see \cite{L}).
Motivated by certain questions related to the design of property-testing algorithms, it was shown in~\cite{CzumajShSo09} that a hereditary family of graphs in which every graph has a small edge separator must be hyperfinite. More precisely, the following holds.

\begin{theo}[\cite{CzumajShSo09}, Corollary~3.2]\label{theo:CSS}
Let $\F$ be a hereditary family of graphs such that every $n$-vertex graph $G \in \F$ satisfies $\sep(G) \leq n/\log n(\log\log n)^2$.
Then, $\F$ is hyperfinite.
\end{theo}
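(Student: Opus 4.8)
The plan is to show that a hereditary family $\F$ with the separator bound $\sep(G)\le n/\log n\,(\log\log n)^2$ for every $n$-vertex $G\in\F$ is hyperfinite, i.e.\ that for every $\e>0$ there is a bound $q(\e)$ such that each $G\in\F$ can be decomposed into components of size at most $q(\e)$ after deleting an $\e$-fraction of its edges. The natural strategy is a recursive decomposition: given $G$ on $n$ vertices, apply the minimum edge separator to split off two parts, each on at most $\frac23 n$ vertices, and recurse on each part. Since $\F$ is hereditary, every graph encountered in the recursion lies in $\F$ and hence obeys the same separator bound, so the recursion is well-defined.

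First I would set up the accounting. Stop the recursion on any piece whose vertex set has size at most $q$, where $q=q(\e)$ will be chosen at the end; this clearly yields components of size $\le q$. It remains to bound the total number of edges removed. At a node of the recursion tree corresponding to a piece $H$ on $m>q$ vertices, we delete at most $m/\log m\,(\log\log m)^2$ edges. The key step is to charge these deletions to the vertices of $H$: each vertex in a piece of size $m$ is charged at most $1/\log m\,(\log\log m)^2\le 1/\log q\,(\log\log q)^2$ edges at that level, since $m>q$ and $x\mapsto (\log\log x)^2/\log x$ is decreasing for large $x$. Now one sums over the $O(\log n)$ levels of the recursion — the piece sizes shrink geometrically by a factor $\frac23$ — so a single vertex of $G$ is charged at most $O(\log n)\cdot 1/\log q\,(\log\log q)^2$ in total; this is the crude bound and it is not good enough because of the $\log n$ factor.

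The hard part, and the reason the $(\log\log n)^2$ (rather than $(\log n)^{1+\e}$) threshold matters, is removing that spurious $\log n$: one must not sum the per-vertex charge naively over all levels. The right refinement is to group the levels into $O(\log\log n/\log\log q)$-ish blocks — or better, to observe that the charge per vertex telescopes. Concretely, if a vertex sits in pieces of sizes $n=m_0>m_1>m_2>\cdots$ down to the stopping size, with $m_{j+1}\le\frac23 m_j$, the total charge it receives is $\sum_j (\log\log m_j)^2/\log m_j$. Writing $\ell_j=\log m_j$, we have $\ell_{j+1}\le\ell_j-c$ for a constant $c=\log(3/2)$, so the $\ell_j$ decrease by at least $c$ at each step, and the sum $\sum_j (\log \ell_j)^2/\ell_j$ over $\ell_j$ running (in steps of $\ge c$) from $\log n$ down to $\log q$ is, up to constants, $\int_{\log q}^{\log n}(\log x)^2/x\,dx=O((\log\log n)^3-(\log\log q)^3)$ — still not bounded. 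This shows the naive vertex-charging is genuinely too lossy, and the actual argument of~\cite{CzumajShSo09} must instead exploit that the number of pieces of a given size is limited: a piece of size $m$ contributes $m/\log m\,(\log\log m)^2$ deleted edges but there are at most $n/m$ such pieces at ``scale $m$'', so the total over one scale is $n/\log m\,(\log\log m)^2$, and summing over the $O(\log n)$ distinct scales $m=n,\frac23 n,\ldots,q$ again gives the same integral. The genuine fix is to recurse not with the constant-factor Lipton--Tarjan-style split but to iterate the separator until all pieces have size at most $n/2^{(\log q)^{1/2}}$ or so, i.e.\ to take large steps in $\log$-scale; then the number of scales is only $O(\log n/\log q\cdot(\text{something}))$, and with the $(\log\log m)^2$ savings the geometric-type sum $\sum_j (\log\log m_j)^2/\log m_j$ becomes dominated by its first term $O((\log\log q)^3/\log q)$ provided consecutive $\log m_j$ drop multiplicatively. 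Choosing $q=q(\e)$ so that $(\log\log q)^3/\log q<\e$ — e.g.\ $q=2^{1/\e^2}$ — then bounds the total fraction of deleted edges by $\e$, since the number of deleted edges is at most (per-vertex charge)$\cdot n \le \e n \le \e\cdot e(G)$ when $G$ has no isolated vertices (and isolated vertices can be ignored).

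In summary, I would: (1) define the recursive separator-based decomposition, halting at pieces of size $\le q(\e)$, using hereditariness to keep applying the hypothesis; (2) bound deleted edges by charging each deletion to the vertices of its piece, getting a per-vertex cost $\sum_j (\log\log m_j)^2/\log m_j$ where $m_0=n>m_1>\cdots$ are the sizes of the nested pieces containing that vertex; (3) arrange the recursion so the $\log m_j$ decrease fast enough that this sum is dominated by its last term $O((\log\log q)^3/\log q)$; (4) choose $q(\e)$ large enough (roughly doubly-exponential in $1/\e$) that this is below $\e$, and conclude every $G\in\F$ is $(\e,q(\e))$-hyperfinite. The crux — and the place the $(\log\log n)^2$ in the hypothesis is used essentially — is step (3): without the $(\log\log m)^2$ refinement (say with threshold $n/(\log n)^{1+\e}$) the same sum is $\sum_j 1/(\log m_j)^{\e}$, which one can still make small but requires the stronger bound, whereas with only $n/\log n$ the sum $\sum_j 1/\log m_j$ over geometrically-many scales diverges like $\log\log n$ and hyperfiniteness genuinely fails — consistent with Corollary~\ref{coro:FoxPach}.
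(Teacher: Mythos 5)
You have misread the hypothesis, and this single error derails the whole argument. The bound in Theorem~\ref{theo:CSS} (and in~\cite{CzumajShSo09}) is $\sep(G)\le n/\bigl(\log n\,(\log\log n)^2\bigr)$, with the $(\log\log n)^2$ \emph{in the denominator} — a separator bound \emph{stronger} than $n/\log n$. Your analysis takes the per-vertex charge at a piece of size $m$ to be $(\log\log m)^2/\log m$, i.e.\ it silently places $(\log\log m)^2$ in the numerator, which is the \emph{weaker} hypothesis $\sep(G)\le (n/\log n)(\log\log n)^2$ from Corollary~\ref{coro:CSS}. That is precisely the regime in which the present paper shows hyperfiniteness \emph{fails}, so of course your sum appears to diverge.

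With the hypothesis read correctly, the naive Lipton--Tarjan recursion you set up in your first two paragraphs already works and there is nothing to fix. The per-vertex charge for a vertex sitting in nested pieces of sizes $n=m_0>m_1>\cdots\ge q$ is
$$\sum_j \frac{1}{\log m_j\,(\log\log m_j)^2}\;,$$
and since each edge separator splits a piece into two parts of size at most $\tfrac23$ of the whole, the quantities $\ell_j=\log m_j$ drop by at least $c=\log(3/2)$ at each step. The worst-case sum is thus dominated by
$$\frac{1}{c}\int_{\log q}^{\infty}\frac{dx}{x(\log x)^2}=\frac{1}{c\,\log\log q}\;,$$
which tends to $0$ as $q\to\infty$; choosing $q(\e)$ roughly triply exponential in $1/\e$ makes it smaller than $\e/2$, and then (after the minor fix you already note, that $n\le 2e(G)$ once isolated vertices are discarded) the total number of deleted edges is below $\e\, e(G)$. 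So the ``genuine fix'' you propose — iterating the separator so that $\log m_j$ drops multiplicatively between scales — is unnecessary, and in any case it does not actually follow from the separator guarantee: a $2/3$-balanced cut forces an \emph{additive} drop in $\log m_j$, not a multiplicative one, and you give no mechanism for producing the larger jumps you need. In summary: your recursive-separation-plus-vertex-charging framework is the right one and matches what~\cite{CzumajShSo09} does, but the core computation is wrong because of the misread exponent; once corrected, the telescoping sum converges and the proof closes with no extra idea.

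(Note also that the paper under review does not reprove Theorem~\ref{theo:CSS}; it cites Corollary~3.2 of~\cite{CzumajShSo09}, so the above is an assessment of correctness rather than a comparison with an in-paper proof.)
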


Using Theorem~\ref{theo:main} we show that the edge-separation requirement in Theorem~\ref{theo:CSS} cannot be improved much beyond $n/\log n$.

\begin{coro}\label{coro:CSS}
The graphs $G_{n,8}$ from Theorem~\ref{theo:main} are not $(\frac12, n^{1/4608})$-hyperfinite. In particular, setting
$$\F_8=\{G \,:\, G \text{ is an induced subgraph of } G_{n,8} \text{ for some } n\}$$ we get a hereditary family of graphs that is not hyperfinite, despite the fact that every $n$-vertex graph $G \in \F_8$ satisfies $\sep(G) \le (n/\log n)(\log\log n)^2+48$.
\end{coro}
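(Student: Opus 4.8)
The plan is to read Corollary~\ref{coro:CSS} off Theorem~\ref{theo:main} and Claim~\ref{claim:many_vertices} in three steps: (a) show $G_{n,8}$ is not $(1/2,n^{1/4608})$-hyperfinite; (b) check the separator bound for the hereditary family $\F_8$; (c) deduce that $\F_8$ is not hyperfinite.

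For (a) I would argue by contradiction. Suppose one can delete at most half of the edges of $G=G_{n,8}$ and be left with a graph $G'$ each of whose connected components spans at most $n^{1/4608}$ vertices. By item $(i)$ of Theorem~\ref{theo:main}, $G$ has average degree at least $8$, hence at least $4n$ edges, so $G'$ still has at least $2n$ edges on $n$ vertices. Summing over the components $C$ of $G'$, $\sum_C\size{E(C)}=\size{E(G')}\ge 2n=2\sum_C\size{V(C)}$, so some component $H$ has $\size{E(H)}\ge 2\size{V(H)}$, i.e.\ average degree at least $4$. Since $H$ is a subgraph of $G_{n,8}$, Claim~\ref{claim:many_vertices} with $k=8$ (for which $72k^2=4608$) gives $\size{V(H)}\ge n^{1/4608}$, contradicting the assumption. (The passage from ``$\ge n^{1/4608}$ vertices'' to an actual violation of $(1/2,n^{1/4608})$-hyperfiniteness is the only rounding issue here, and is harmless; in fact the girth bound used in Claim~\ref{claim:many_vertices} is strict for large $n$.)

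For (b), $\F_8$ is hereditary by construction. Let $G\in\F_8$ be an $n$-vertex induced subgraph of $G_{m,8}$. If $n\ge \log m/48^2$, then item $(iii)$ of Theorem~\ref{theo:main}, applied to $G$ viewed as a subgraph of $G_{m,8}$ with $t=n$, gives $\sep(G)\le (n/\log n)(\log\log n)^2$. Otherwise $n<\log m/48^2$, and by item $(ii)$ every cycle of $G$ would have length at least $\log m/48^2>n$ — impossible — so $G$ is a forest; by Fact~\ref{fact:treeSep} (and the routine extension from trees to forests, where one removes the separating vertex of the largest component and then $2$-colours the resulting pieces) $G$ has a vertex separator of size at most $1$, and taking all edges at that vertex gives an edge separator of size at most $\D(G)\le\D(G_{m,8})\le 48$. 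In both cases $\sep(G)\le (n/\log n)(\log\log n)^2+48$.

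For (c), suppose $\F_8$ were hyperfinite, witnessed by a function $q(\cdot)$, and set $q_0=q(1/2)$. Choosing $m$ large enough that $G_{m,8}$ is defined and $m^{1/4608}>q_0$, the graph $G_{m,8}$ belongs to $\F_8$, yet by (a) deleting half of its edges always leaves a component on more than $q_0$ vertices, contradicting $(1/2,q_0)$-hyperfiniteness. No genuine obstacle arises in this corollary — the content lives entirely in Theorem~\ref{theo:main} and Claim~\ref{claim:many_vertices} — and the only mildly delicate points are the forest case of the separator bound in (b) and the rounding remark in (a), both of which are routine.
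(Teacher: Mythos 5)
Your argument is correct and follows the paper's proof essentially verbatim: part (a) is the paper's one-line appeal to Claim~\ref{claim:many_vertices} with the averaging step spelled out, part (b) unpacks the paper's ``items $(i)$, $(ii)$, $(iii)$ together with Fact~\ref{fact:treeSep}'' into the two cases $t\ge\log m/(6k)^2$ and $t<\log m/(6k)^2$ exactly as intended, and part (c) is the paper's observation that non-$(\frac12,q)$-hyperfiniteness of $G_{n,8}$ for all large $n$ directly refutes hyperfiniteness of $\F_8$. The one point where you go slightly beyond the paper is the rounding remark in (a): the paper silently treats $|V(H)|\ge n^{1/4608}$ as a contradiction with ``components of size at most $n^{1/4608}$''; your observation that the girth bound coming from Lemma~\ref{lemma:construction} is in fact strict (the construction removes an edge from every cycle of length at most $L$, so girth $>L$) does close this cosmetic gap.
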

\begin{proof}
To see that the first assertion holds, note that after removing at most half of the edges of $G_{n,8}$, we obtain a graph with average degree at least $4$. This graph has a connected component $H$ of average degree at least $4$. By Claim~\ref{claim:many_vertices}, the number of vertices in $H$ is least $n^{1/4608}$.
As to the second assertion of the corollary, note that the first assertion clearly means that $\F_8$ is not hyperfinite.
Also, note that the fact that $\sep(G) \leq (n/\log n)(\log\log n)^2+48$ holds for every $G \in \F_8$ follows from items $(i)$,$(ii)$ and $(iii)$ of Theorem~\ref{theo:main} together with Fact~\ref{fact:treeSep}.
\end{proof}

\section{Proof of Theorem \ref{theo:main}}\label{sec:proof}

As mentioned earlier, in order to construct the graphs $G_{n,k}$ of Theorem~\ref{theo:main} we pick a suitable random subgraph of the hypercube.
The $d$-cube, denoted $Q_d$, is the graph with vertex set $\{0,1\}^d$ where two vertices
are adjacent if their corresponding vectors
differ in exactly one coordinate.
Note that $Q_d$ is a $d$-regular graph on $2^d$ vertices.
The proof will have three main steps, appearing in the following three subsections. In the first subsection we define the graphs $G_{n,k}$ and prove that they satisfy the first two properties of Theorem~\ref{theo:main}. In the second step we prove that large subgraphs of the
cube do not have good expansion properties. Finally, in the last subsection we combine the results from the first two subsections to prove that the graphs $G_{n,k}$ also satisfy the third property in Theorem~\ref{theo:main}, thus completing its proof.

\subsection{Construction}

Our goal in this subsection is to prove the following lemma.

\begin{lemma}\label{lemma:construction}
For any $d\in\N$ and $2 \leq k \leq d$ there is a subgraph $Q=Q_{d,k}$ of the $d$-cube, on all $2^d$ vertices,
such that:
\begin{enumerate}
\item The average degree of $Q$ is at least $k$.
\item The maximum degree of $Q$ is at most $3k$.
\item The girth of $Q$ is at least $d/(3k)^2$.
\end{enumerate}
\end{lemma}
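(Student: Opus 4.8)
The plan is to obtain $Q_{d,k}$ by keeping each edge of $Q_d$ independently with a suitable probability $p$, and then to repair the (rare) bad local events by deleting a few more edges. The natural choice is $p = \Theta(k/d)$, so that each vertex has expected degree $\Theta(k)$; more precisely I would take $p$ slightly larger than $k/d$ (say $p = 2k/d$) so that after the cleanup we still retain average degree at least $k$. First I would analyze the degree sequence: each vertex of $Q_d$ has degree $d$, so in the random subgraph its degree is $\mathrm{Bin}(d,p)$ with mean $pd = 2k$. A Chernoff bound shows that the probability a fixed vertex has degree exceeding $3k$ is exponentially small in $k$, and since $k$ may be as small as $2$ this is only a constant, not $o(1)$ — so I cannot simply union-bound over all $2^d$ vertices. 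Instead I would use the standard deletion/alteration trick: compute the expected number of ``bad'' structures (vertices of too-high degree, and short cycles), show this expected number is at most, say, $2^d/100$, and then delete one vertex's worth of edges for each bad structure, which destroys only a small constant fraction of the vertices/edges and hence leaves a graph on all $2^d$ vertices (we never delete vertices, only incident edges) with the required average degree.

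The key steps, in order, would be: (1) fix $p = 2k/d$ (legitimate since $k \le d$); (2) bound $\E[\#\{\text{edges}\}] = p\binom{d}{1}2^{d-1}/1 = p d 2^{d-1} = k\,2^d$, so the random subgraph has average degree about $2k$ in expectation, and by Markov/concentration we may fix an outcome where the number of edges is at least, say, $k\,2^{d-1}$ while the number of high-degree vertices and short cycles is not much more than its expectation; (3) bound the expected number of vertices of degree $>3k$ — this is $2^d \cdot \Pr[\mathrm{Bin}(d,p) > 3k]$, and although the per-vertex probability is only a small constant $c_0$, I would choose the constant in $p$ small enough (balancing against girth) that $c_0$ is tiny, making this count at most $2^d/100$; (4) bound the expected number of cycles of length less than $g := d/(3k)^2$ in $Q_d$: the number of closed walks / cycles of length $\ell$ in $Q_d$ is at most $2^d d^\ell$ (choose a starting vertex and $\ell$ coordinate-directions), each survives with probability $p^\ell$, so the expected number of short cycles is at most $\sum_{\ell < g} 2^d (pd)^\ell = 2^d \sum_{\ell<g}(2k)^\ell \le 2^d (2k)^g$, and since $g = d/(3k)^2$ this is $2^{d + g\log(2k)} = 2^{d + O(d\log k / k^2)}$ — here the exponent $g\log(2k) = \frac{d\log(2k)}{9k^2}$ is genuinely smaller than $d$ only once we are slightly more careful, so I would instead truncate the girth requirement or sharpen the walk count (using that a cycle uses each direction an even number of times, giving a count more like $d^{\ell/2}$), to push the expected number of short cycles below $2^d/100$; (5) form $Q_{d,k}$ by deleting all edges incident to a high-degree vertex and one edge from each short cycle — at most $2^d/100 \cdot 3k + 2^d/100$ edges removed, which is a $o(1)$-or-small-constant fraction of the $k2^{d-1}$ edges, leaving average degree at least $k$ (after adjusting constants in $p$), maximum degree at most $3k$ by construction, and girth at least $g$.

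I expect the main obstacle to be step (4): getting the expected number of short cycles strictly below a small fraction of $2^d$ requires the cycle-counting bound to beat $p^{-\ell}$, and the crude bound $d^\ell$ for the number of length-$\ell$ cycles is too lossy because $p d = 2k$ is a constant bounded away from $1$ only in the wrong direction — it is larger than $1$. The fix is the observation that a closed walk in $Q_d$ must traverse each of its coordinate directions an even number of times, so a cycle of length $\ell$ is determined (up to lower-order factors) by a choice of $\ell/2$ directions and how they interleave, giving roughly $d^{\ell/2} \cdot (\text{something sub-exponential in }\ell)$ cycles through a given vertex; then the expected count is about $2^d \sum_{\ell<g} (p\sqrt d)^\ell \cdot \binom{\ell}{\ell/2}$-type terms with $p\sqrt d = 2k/\sqrt d$, which for $d$ large compared to $k^2$ (and $\ell < g = d/(3k)^2$) is far below $1$ per vertex, so the total is $o(2^d)$. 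With that in hand the deletion argument closes immediately. A secondary, purely bookkeeping obstacle is choosing the three constants (the multiplier in $p$, the $3k$ degree cap, the $(3k)^2$ in the girth) consistently so that all three ``bad event'' counts are simultaneously at most a small constant fraction of $2^d$; this is routine once the cycle bound is right.
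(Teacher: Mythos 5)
Your approach is essentially the paper's: take a random subgraph of $Q_d$ with edge-probability $p=\Theta(k/d)$, compute the expected number of short cycles and the expected damage from high-degree vertices, and use the deletion/alteration method. You also correctly identify the one non-routine ingredient — that a naive $d^\ell$ bound on the number of length-$\ell$ cycles through a vertex is too weak, and must be replaced by a bound exploiting that a closed walk in $Q_d$ traverses each coordinate-direction an even number of times. That is exactly Claim~3.2 in the paper, which gives at most $2^d(d\ell)^\ell$ cycles of length $2\ell$ by pairing up the $2\ell$ steps and assigning a direction to each pair.

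Two places where the paper's execution is cleaner and you should compare. First, the quantitative form of the cycle bound: you guess the correction factor is $\binom{\ell}{\ell/2}$-like (hence ``sub-exponential''), whereas the correct count coming from pairings is $\ell^{\Theta(\ell)}$, which is super-exponential in $\ell$. This discrepancy does not break the argument, because the dominant factor $p^{\ell}d^{\ell/2}$ collapses to $(\text{const}\cdot k^2\ell/d)^{\ell/2}$, which is geometrically small as long as $\ell\lesssim d/k^2$ — exactly the girth one is aiming for — but you should be aware your stated form of the count is an underestimate. Second, and more substantively, your cleanup of high-degree vertices (delete \emph{all} edges incident to each vertex of degree $>3k$) requires delicate numerical balancing for small $k$: with $p=2k/d$ and $k=2$, the expected number of edge-endpoints at vertices of degree $>6$ is roughly $0.86\cdot 2^d$, which is uncomfortably close to the $2\cdot 2^d$ edges you expect to have, so the constants only barely close. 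The paper instead sets $p=3k/d$ and deletes only the \emph{excess} edges $\sum_v(\deg(v)-3k)_+$; bounding this via Jensen and $\Var[\deg(v)]\le 3k$ gives $\E[Z]\le\sqrt{3k}\,2^{d-1}$, and the final inequality $3k-1-\sqrt{3k}\ge k$ is then a trivial check for all $k\ge 2$. That trick (delete only excess, bound by $L^1$ deviation via variance) is what makes the constant-chasing painless and uniform in $k$; you would do well to adopt it rather than fight the numerics.
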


For the proof of the above lemma we will need the following upper bound on the number of cycles of a given length in the cube.

\begin{claim}\label{claim:cycles}
The number of cycles of length $2\l$ in the $d$-cube is at most $2^d (d\l)^{\l}$.
\end{claim}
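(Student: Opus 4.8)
The plan is to bound, instead of cycles directly, the larger quantity: the number of closed walks of length $2\ell$ in $Q_d$, i.e. sequences $v_0,v_1,\dots,v_{2\ell}=v_0$ of vertices with $v_{i-1}v_i\in E(Q_d)$. Every $2\ell$-cycle, traversed starting from one of its vertices in one of the two directions, yields such a closed walk, and the cycle is recovered from the walk; hence distinct cycles give distinct walks, and it suffices to prove that the number of closed walks of length $2\ell$ is at most $2^d(d\ell)^{\ell}$.

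A closed walk is determined by its start vertex $v_0$ together with the sequence of flipped coordinates $(c_1,\dots,c_{2\ell})\in[d]^{2\ell}$, where step $i$ goes from $v_{i-1}$ to $v_{i-1}\oplus e_{c_i}$. Since the walk returns to $v_0$, every coordinate must be flipped an even number of times; that is, every symbol of $[d]$ has even multiplicity in $(c_1,\dots,c_{2\ell})$. So I would fix $v_0$ (a factor $2^d$) and bound the number of such ``even'' sequences in $[d]^{2\ell}$.

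To count even sequences, I would exhibit an injection from them into pairs (perfect matching $M$ of $\{1,\dots,2\ell\}$, a labelling of the $\ell$ edges of $M$ by elements of $[d]$): given an even sequence $c$, for each symbol $a$ write the positions where $a$ occurs in increasing order and pair them off consecutively, obtaining a perfect matching every edge of which joins two positions bearing the same symbol; labelling each edge by that common symbol then lets one reconstruct $c$ position by position, so the map is injective. There are $(2\ell-1)!!$ perfect matchings of $2\ell$ points and $d^{\ell}$ labellings, so the number of even sequences is at most $(2\ell-1)!!\,d^{\ell}$. Finally, by the AM--GM inequality, $(2\ell-1)!!=\prod_{j=1}^{\ell}(2j-1)\le\bigl(\tfrac1\ell\sum_{j=1}^{\ell}(2j-1)\bigr)^{\ell}=\ell^{\ell}$, where we used $\sum_{j=1}^{\ell}(2j-1)=\ell^2$. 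Thus the number of even sequences is at most $(d\ell)^{\ell}$, and the number of closed walks of length $2\ell$ is at most $2^d(d\ell)^{\ell}$, as claimed.

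The only step needing genuine care — and the main ``obstacle'' here, modest as it is — is checking that the pairing-off map is well-defined and injective: that the consecutive pairing within each symbol's occurrence list is unambiguous, and that the matching together with its edge labels genuinely recovers the whole sequence $(c_1,\dots,c_{2\ell})$. The remaining ingredients (reducing cycles to closed walks, the even-multiplicity observation, and the AM--GM bound on the double factorial) are routine.
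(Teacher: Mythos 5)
Your proof is correct and follows essentially the same route as the paper: reduce to counting closed walks of length $2\ell$, observe that the sequence of flipped coordinates has every symbol with even multiplicity, bound such sequences by perfect matchings of $[2\ell]$ labelled by coordinates, and bound the number of matchings $(2\ell-1)!!$ by $\ell^\ell$. The only cosmetic differences are that you make the map from even sequences to (matching, labelling) pairs an explicit injection via consecutive pairing, whereas the paper phrases it as a generative overcount, and you obtain $(2\ell-1)!!\le\ell^\ell$ by AM--GM rather than via the chain $(2\ell-1)!!\le 2^{\ell}\ell!/2\le\ell^\ell$.
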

\begin{proof}
We will show that the number of closed walks of length $2\l$, starting and ending at a given vertex, is at most $(d\l)^{\l}$; this would immediately imply the stated bound.
We claim that each such closed walk corresponds to a sequence $(x_1,\ldots,x_{2\l})\in[d]^{2\l}$ with the property that $\s{\{i\in[2\l]\,:\, x_i=t\}}$ is even for every $t\in [d]$. To see this, recall that each vertex in the graph corresponds to an element of $\{0,1\}^d$. The claim follows by considering, for each edge along a closed walk beginning (and ending) with a given vertex, the unique index in which the bit ``flips''.
So all that is left is to bound from above the number of sequences as above.
Now, observe that all these sequences can be generated by first partitioning the set of indices $\{1,\ldots,2\l\}$ into $\l$ pairs and then
assigning to each pair a value from $\{1,\ldots,d\}$ (of course, this process will generate some sequences several times). Since the number of ways one can pair the elements of $\{1,\ldots,2\ell\}$ is given by
$$(2\l-1)!!=(2\l-1)(2\l-3)\cdots 3\cdot 1\le 2^\l \l!/2 \le 2^\l (\l/2)^{\l} = \l^\l\;,$$
we get that the number of sequences is bounded from above by $d^\ell\cdot\l^\l$.
\end{proof}

To prove the existence of a graph as in Lemma~\ref{lemma:construction} we use the so-called probabilistic ``deletion method''.

\begin{proof}[Proof of Lemma~\ref{lemma:construction}]
Denote by $G'$ the random subgraph of the $d$-cube where each edge is independently retained with probability $p=3k/d$.
Let the random variable $X$ count the number of edges of $G'$. Then
\begin{equation}\label{eq:cons1}
\E[X]=p\cdot d2^{d-1} = 3k \cdot 2^{d-1} \;.
\end{equation}
Set $L=d/9k^2$ and let the random variable $Y$ count the number of cycles of length at most $L$ in $G'$. By Claim~\ref{claim:cycles},
\begin{equation}\label{eq:cons2}
\E[Y] \le \sum_{\l=2}^{L/2} p^{2\l} 2^d(d\l)^\l =
2^d\sum_{\l=2}^{L/2} (p^2 d\l)^\l =
2^d \sum_{\l=2}^{L/2} (\l/L)^{\l} \le
2^d \sum_{\l=2}^{L/2} (1/2)^{\l} \le 2^{d-1} \;.
\end{equation}
Let the random variable $Z$ count the total number of ``excess'' edges in $G'$, that is,
$$Z=\sum_{v \,:\,\deg(v)>3k} (\deg(v)-3k)\;.$$
For each vertex $v$, the random variable $\deg(v)$ follows the binomial distribution $B(d,3k/d)$, so $\E[\deg(v)]=3k$.
Note that $\sum_{v} \s{\deg(v)-3k} = \sum_v(3k-\deg(v)) + 2Z$, which means $\E[Z] = \frac12\sum_{v} \E\s{\deg(v)-3k}$.
By J\'{e}nsen's inequality,
$$(\E\s{\deg(v)-3k})^2 \le \E[(\deg(v)-3k)^2] = \Var[\deg(v)] \le 3k \;,$$
implying that
\begin{equation}\label{eq:cons3}
\E[Z] = \frac12 \sum_v \E\s{\deg(v)-3k} \le \sqrt{3k}\cdot 2^{d-1} \;.
\end{equation}
Combining~(\ref{eq:cons1}),(\ref{eq:cons2}), and (\ref{eq:cons3}) we get
\begin{equation}\label{eq:Expectation}
\E[X-Y-Z] \ge (3k - 1 - \sqrt{3k}) \cdot 2^{d-1} \ge k \cdot 2^{d-1} \;,
\end{equation}
where the last inequality can be easily checked to hold for any $k\ge 2$.
Let $Q$ be obtained from $G'$ by removing an arbitrary edge from each cycle of length at most $d/9k^2$, as well as removing, for each vertex $v$ with $\deg(v)>3k$, arbitrary $\deg(v)-3k$ adjacent edges.
Clearly, $Q$ satisfies the last two requirements in the statement. Moreover, we have from~(\ref{eq:Expectation}) that the expected average degree of $Q$ is at least $k$.
The existence of a subgraph of the $d$-cube as required immediately follows.
\end{proof}

\subsection{Expansion in the hypercube}

In this subsection we prove that large subgraphs of the $d$-cube are not good edge expanders.
Recall that a graph $G=(V,E)$ is said to have edge expansion $\a$ if every subset $S\sub V$ with $\s{S}\le\s{V}/2$ satisfies $\s{\partial(S)} \ge \a\s{S}$ (the edge boundary $\partial(S)$ is the set of edges with exactly one vertex in $S$). Our goal in this subsection is to prove the following lemma.

\begin{lemma}\label{lemma:cubeExp}
Any (not necessarily induced) subgraph of the $d$-cube with $t$ vertices and average degree $r$ has edge expansion at most $2r\log d/\log (t/2)$.
\end{lemma}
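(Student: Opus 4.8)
The plan is to use the metric-embedding strategy of Linial–London–Rabinovich. Let $H$ be a subgraph of $Q_d$ with vertex set $S\subseteq\{0,1\}^d$, $\size{S}=t$, and average degree $r$, so $H$ has $rt/2$ edges. The key point is that the Hamming metric on $\{0,1\}^d$ decomposes naturally into $d$ ``cut metrics'': for each coordinate $j\in[d]$, let $C_j=\{x\in S : x_j=0\}$ and consider the cut $(C_j, S\sm C_j)$. Then for any two vertices $x,y\in S$, their Hamming distance $\norm{x-y}_1$ equals the number of coordinates $j$ for which exactly one of $x,y$ lies in $C_j$; in particular every edge of $Q_d$ (hence of $H$) is cut by exactly one $C_j$, so $\sum_{j=1}^d \size{\partial_H(C_j)} = e(H) = rt/2$.

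First I would average to find a good coordinate: since the $d$ quantities $\size{\partial_H(C_j)}$ sum to $rt/2$, some $j^\ast$ satisfies $\size{\partial_H(C_{j^\ast})}\le rt/(2d)$. This cut alone may be very unbalanced, though, so the second step is a standard ``balancing'' argument: we want a set of size at most $t/2$ with small boundary. If $\size{C_{j^\ast}}\le t/2$ we are essentially done up to the $\log$ factor; otherwise we must iterate or combine cuts. The cleaner route — which I expect is what the paper does to get the $\log d/\log(t/2)$ shape — is a dyadic/divide-and-conquer argument: repeatedly split $S$ using the cheapest available coordinate cut restricted to the current part, building a laminar family; after about $\log(t/2)$ rounds of halving one reaches parts of size $\le t/2$, and one shows that along the way the total boundary charged is controlled because at each level the relevant coordinate cut costs at most a $1/d$-fraction of the edges within that part, and there are at most $d$ coordinates but only $\log(t/2)$ levels matter. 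Summing the geometric-type bound over the $\le\log(t/2)$ levels, and optimizing which of the $\le d$ coordinate cuts to charge, yields a set $A$ with $\size{A}\le t/2$ and $\size{\partial_H(A)}/\size{A}\le 2r\log d/\log(t/2)$.

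The main obstacle is the balancing step: a single cheap coordinate cut need not be balanced, and naively recursing could blow up the total boundary. The trick is to observe that each coordinate can be ``used'' at most once along any root-to-leaf path (once a part lies entirely inside $C_j$ or its complement, coordinate $j$ contributes nothing), so the recursion depth in terms of *useful* splits is bounded, and the number of halvings needed to shrink a part below $t/2$ is only about $\log(t/2)$; matching these two accountings is where the ratio $\log d/\log(t/2)$ comes from. Once a sub-$t/2$ set with the claimed cheap boundary is produced, edge expansion at most $2r\log d/\log(t/2)$ follows immediately from the definition, which completes the proof of Lemma~\ref{lemma:cubeExp}.
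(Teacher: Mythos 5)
Your opening move is the right one: decomposing the Hamming metric into coordinate cuts and noting that every edge of $H$ is cut by exactly one of the $d$ cuts $C_j$, so $\sum_j \size{\partial_H(C_j)} = \size{E(H)} = rt/2$. But from there the two proofs diverge sharply, and I don't think your route closes.

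The trouble, which you flag but do not resolve, is the balancing step. Your plan is to iteratively peel off cheap coordinate cuts until you accumulate a set of size roughly $t/2$, and to charge the total boundary to coordinates, using the observation that each coordinate can be cut at most once along any root-to-leaf path. That observation bounds the recursion depth by $d$, not by $\log(t/2)$; the cheapest coordinate cut at each step might peel off a single vertex, in which case you need on the order of $t$ iterations to reach size $t/2$, and there is no mechanism preventing the accumulated boundary from being far larger than $rt\log d / \log(t/2)$. The sentence ``matching these two accountings is where the ratio $\log d/\log(t/2)$ comes from'' is exactly the missing content: it asserts the bound without an argument. Also, ``after about $\log(t/2)$ rounds of halving one reaches parts of size $\le t/2$'' is not right as stated — one genuine halving already gets you below $t/2$; the difficulty is precisely that the cuts are not halvings.

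The paper's actual proof sidesteps balancing entirely and is genuinely different in structure. It works in the contrapositive: assume $H$ has edge expansion $\alpha$, and bound $\alpha$ from above. Writing $\delta_{Q_d}(u,v)=\sum_i\size{u_i-v_i}$, one compares $\sum_{\{u,v\}\in E(H)}\delta_{Q_d}(u,v)$ (which is just $\size{E(H)}$) to $\sum_{\{u,v\}\in\binom{S}{2}}\delta_{Q_d}(u,v)$, coordinate by coordinate. The minimum over coordinates of $x_i/y_i$ (with $x_i=\size{\partial_H(T_i)}$, $y_i=\size{T_i}\size{S\sm T_i}$ for the $i$-th coordinate cut $T_i$) is at least $\alpha/(\size{S}-1)$ directly from the \emph{definition} of edge expansion — no balanced cut needs to be produced. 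This yields an upper bound on the average pairwise $Q_d$-distance within $S$ of the form $r/\alpha$, which is then played against a lower bound on that same average distance coming from a ball-growth argument in a degree-$d$ graph (Claim~\ref{claim:ball}): in $Q_d$, at least half of $S$ lies at distance at least $\log(t/2)/\log d$ from any fixed vertex. Combining the two bounds gives $\alpha\le 2r\log d/\log(t/2)$ in one stroke. So the metric part of the LLR idea — comparing edge lengths to average distances and invoking a volume bound — is what makes the proof work, not a cut-balancing recursion; your proposal keeps the cut decomposition but drops the distance comparison, which is where the cleanness (and the actual proof) lives.
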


We first need the following easy claim, in which the \emph{distance} between two vertices $u,v$ in a graph $G$,
denoted  $\d_G(u,v)$, is the length of a shortest path connecting $u$ and $v$.

\begin{claim}\label{claim:ball}
For every graph $G$ of maximum degree $\D>1$, and every $t$-vertex subset $S\sub V(G)$, the average distance $\sum_{\{u,v\}\in\binom{S}{2}} \d_G(u,v)/\binom{t}{2}$ is at least
$\log (t/2)/(2\log\D)$.
\end{claim}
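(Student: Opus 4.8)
The plan is to bound, for each vertex $u \in S$, how many other vertices of $S$ can lie within distance $\rho$ of $u$, and then conclude that most pairs in $\binom{S}{2}$ must be far apart. First I would note that in a graph of maximum degree $\D$, the ball $B_G(u,\rho) = \{v : \d_G(u,v) \le \rho\}$ satisfies $\s{B_G(u,\rho)} \le 1 + \D + \D(\D-1) + \cdots \le \D^{\rho+1}$ (using $\D > 1$ to sum the geometric-type series, with a crude bound such as $\sum_{j=0}^{\rho}\D^j \le \D^{\rho+1}$ when $\D\ge 2$, or the exact $(\D^{\rho+1}-1)/(\D-1)$). Setting $\rho = \rho^\ast := \log(t/2)/(2\log\D) - 1$, we get $\s{B_G(u,\rho^\ast)} \le \D^{\rho^\ast+1} = 2^{(\rho^\ast+1)\log\D} = \sqrt{t/2} \le t/2$ for $t$ large (and the statement is trivial for small $t$). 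Hence for every fixed $u\in S$, at least $t/2$ of the vertices $v\in S$ have $\d_G(u,v) > \rho^\ast$.

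Next I would sum over $u$: the number of ordered pairs $(u,v)\in S^2$ with $\d_G(u,v) > \rho^\ast$ is at least $t\cdot(t/2) = t^2/2$, hence the number of unordered pairs in $\binom{S}{2}$ with $\d_G(u,v) > \rho^\ast$ is at least $t^2/4 \ge \binom{t}{2}/2$ (for $t\ge 2$, since $\binom{t}{2} = t(t-1)/2 \le t^2/2$). Therefore
$$
\sum_{\{u,v\}\in\binom{S}{2}} \d_G(u,v) \;\ge\; \frac{\binom{t}{2}}{2}\cdot \rho^\ast,
$$
so the average distance is at least $\rho^\ast/2 = \log(t/2)/(4\log\D) - 1/2$. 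This is off from the claimed bound by a factor of $2$ and an additive constant, so I would instead be a bit more careful with the choice of threshold: take $\rho^\ast$ so that the ball has size just below $t/2$, i.e. $\rho^\ast = \lfloor \log(t/2)/\log\D\rfloor - 1$, giving $\s{B_G(u,\rho^\ast)} \le \D^{\rho^\ast+1} \le t/2$; then each $u$ is far (distance $>\rho^\ast$) from at least $t/2$ vertices of $S$, and reusing the pair-counting above, the average distance is at least $\tfrac12\rho^\ast \ge \log(t/2)/(2\log\D)$ once one absorbs the floor and the $-1$ by using, say, $t\ge 4$ (smaller $t$ being trivial since then the RHS is at most $1$ and distances between distinct vertices are at least $1$, while the fraction of distinct pairs at distance $\ge 1$ is everything).

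The main subtlety — not a deep obstacle, but the place where the constants must be handled — is matching the exact constant $1/(2\log\D)$ in the statement: this forces the ball-size threshold to be essentially $\D^{\rho} \approx \sqrt{t/2}$ rather than $t/2$, so that even after losing a factor of $2$ in passing from "far from at least half of $S$" to "average over all pairs," the bound $\log(t/2)/(2\log\D)$ survives. Concretely I would set $\rho = \lceil \tfrac{1}{2}\log(t/2)/\log\D\rceil$ and observe $\s{B_G(u,\rho-1)} < 2\D^{\rho} \le$ roughly $\sqrt{t/2}$-ish — one has to track whether the extra factor of $\D$ from the geometric sum is swallowed by the ceiling — but for the clean statement it suffices to prove the weaker "at least $t/2$ vertices are at distance $\ge \rho$" for a slightly larger $\rho$ and then note the paper only uses this claim through Lemma~\ref{lemma:cubeExp}, where constant factors are immaterial. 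So in the write-up I would simply pick the threshold generously, verify $\s{B_G(u,\rho)}\le t/2$, and push through the two-line pair count.
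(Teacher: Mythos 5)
Your approach---ball growth in a bounded-degree graph followed by a pair count---is exactly the one the paper uses, and your \emph{second} attempt is essentially the paper's proof: set $\ell = \log(t/2)/\log\Delta$ so that $\Delta^\ell = t/2$; for any $v\in S$ the number of vertices within distance $\ell-1$ of $v$ is $\sum_{i=0}^{\lfloor\ell-1\rfloor}\Delta^i < \Delta^\ell = t/2$, hence at least $t/2$ vertices of $S$ are at distance at least $\ell$ from $v$; summing over $v$ and dividing by $\binom{t}{2}\le t^2/2$ yields an average distance of at least $\ell/2 = \log(t/2)/(2\log\Delta)$.

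Where you go astray is the ``main subtlety'' paragraph and the final choice $\rho=\lceil \tfrac12\log(t/2)/\log\Delta\rceil$. There is no second factor-of-two loss that the ball threshold must pre-compensate for: the factor $\tfrac12$ in the target $\log(t/2)/(2\log\Delta)$ is produced exactly once, by the averaging step (only about half the unordered pairs are guaranteed to be far). The ball should therefore have size at most $t/2$, not $\sqrt{t/2}$. With the $t/2$ threshold, far pairs have distance at least $\ell$ and the pair count gives an average of at least $\ell/2$, which is the claim. With your $\sqrt{t/2}$ threshold, far pairs have distance only about $\ell/2$, and after averaging you obtain only about $\ell/4$---half the claimed bound. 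So if you carry out your final plan literally you prove a strictly weaker inequality, and the closing remark that ``the paper only uses this through Lemma~\ref{lemma:cubeExp} where constant factors are immaterial'' does not rescue it: the claim states a specific bound, and that bound is reached cleanly with the $t/2$ threshold and nothing more. Also note that your last sentence (``pick the threshold generously, verify $\lvert B_G(u,\rho)\rvert\le t/2$'') contradicts the $\rho=\lceil\ell/2\rceil$ choice two lines earlier, which is far from generous. Discard that final paragraph, keep the $\lfloor\ell\rfloor-1$ threshold from your second attempt, and you have the paper's argument.
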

\begin{proof}
Let $v\in S$.
We claim that there are at least $t/2$ vertices in $S$ of distance at least $\l=\log(t/2)/\log\D$ from $v$. Indeed, the number of vertices of distance at most $\l-1$ from $v$ is at most $\sum_{i=0}^{\floor{\l-1}} \D^i < \D^{\l} \le t/2$. It follows that the average distance is at least $\l/2$.
\end{proof}

We are now ready to prove the main result of this subsection.

\begin{proof}[Proof of Lemma~\ref{lemma:cubeExp}]
Let $H=(S,E)$ be a $t$-vertex subgraph of the $d$-cube $Q_d$ of average degree $r$, and let $\a$ denote the edge expansion of $H$. We need to show that $\a \le 2r\log d/\log (t/2)$. Note that we may assume $\alpha>0$ (i.e., that $H$ is connected), as otherwise there is nothing to prove.
For each vertex $v \in S$, write $(v_1,\ldots,v_d)\in\{0,1\}^d$ for the corresponding binary vector.
Notice $\d_{Q_d}(u,v)= \sum_{i=1}^d \s{u_i-v_i}$.
Observe that
\begin{equation}\label{eq:frac}
\frac{\sum_{\{u,v\}\in E} \d_{Q_d}(u,v)}{\sum_{\{u,v\}\in \binom{S}{2}} \d_{Q_d}(u,v)} =
\frac{\sum_{i=1}^d  \sum_{\{u,v\}\in E} \s{u_i-v_i}}{\sum_{i=1}^d \sum_{\{u,v\}\in \binom{S}{2}} \s{u_i-v_i}} =:
\frac{\sum_{i=1}^d x_i}{\sum_{i=1}^d y_i} \ge
\min_{i:\,y_i\neq 0} \frac{x_i}{y_i} \;,
\end{equation}
where the inequality holds\footnote{Indeed, denoting the minimum by $m$, we have $x_i\ge m y_i$ for every $i$ (even if $y_i=0$), so $(\sum_{i=1}^d x_i)\ge m(\sum_{i=1}^d y_i)$.} for all non-negative reals $x_1,y_1,\ldots,x_d,y_d$.
Let $i\in[d]$ achieve the minimum in~(\ref{eq:frac}) (which is well defined since  $y_i\neq 0$ for some $i$), and set $T=\{v\in S \,:\, v_i=1\}$. Note that since $y_i >0$ we have $0<\s{T}<\s{S}$ and that we can in fact assume that
$0 < \s{T}\le\s{S}/2$ (otherwise we replace $T$ with $S\sm T$).
Noting that in~(\ref{eq:frac}) we have $x_i=\s{\partial{T}}$ and $y_i=\s{T}\s{S\sm T}$, we deduce that
$$
\frac{\sum_{\{u,v\}\in E} \d_{Q_d}(u,v)}{\sum_{\{u,v\}\in \binom{S}{2}} \d_{Q_d}(u,v)} \ge \frac{\s{\partial{T}}}{\s{T}\s{S\sm T}} \ge
\min_{\substack{ S'\sub S\,:\ \\ 0<\s{S'}\le\s{S}/2}} \frac{\s{\partial{S'}}}{\s{S'}\s{S\sm S'}} \ge
\frac{\a}{\s{S}-1} \;.
$$
Therefore,
\begin{equation}\label{eq:avgDist}
\frac{1}{\binom{\s{S}}{2}}\sum_{\{u,v\}\in\binom{S}{2}} \d_{Q_d}(u,v) \le \frac{2}{\a\s{S}}\sum_{\{u,v\}\in E} \d_{Q_d}(u,v) = \frac{2\s{E}}{\a\s{S}} = \frac{r}{\a} \;.
\end{equation}
Applying Claim~\ref{claim:ball} with $G=Q_d$ and the set $S$, the left hand side of~(\ref{eq:avgDist}) is at least
$\log (t/2)/2\log d$.
The desired bound on $\a$ follows.
\end{proof}

\subsection{Putting it all together}

To prove Theorem~\ref{theo:main} we will show that the graphs constructed in Lemma~\ref{lemma:construction} have the property that every subgraph $H$ has a small edge separator. To this end, we start by proving that every subgraph $H$ has a small edge expansion.
In the proof we will consider two cases, depending on the number of vertices of $H$. If $H$ has at most $2^{d^{1/3}}$ vertices, then having
high expansion
implies (see Lemma~\ref{lemma:girthE} below) the existence of a cycle of length $O(d/k^2)$; however, such a short cycle does not exist in the graph. Otherwise, since the maximum degree of the graph is bounded, Lemma~\ref{lemma:cubeExp} implies that $H$ has small edge expansion.
Finally, we boost this ``hereditary'' non-expansion property to construct a small separator in $H$ (see Lemma~\ref{lemma:boost} below).

To execute the above proof strategy we will need the following two lemmas, whose proofs appear in Section~\ref{sec:add}.
Recall that we use $\sep(G)$ to denote the size of the smallest edge separator in $G$.

\begin{lemma}\label{lemma:girthE}
Every connected $n$-vertex graph with edge expansion $\a ~(>0)$ that is not a tree
has girth at most $2(2/\a +1)(\ln(n)+1) + 1$.
In particular, if $\a\le 1$ then the girth is at most $12\ln n/\a$.
\end{lemma}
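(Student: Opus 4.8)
\textbf{Proof proposal for Lemma~\ref{lemma:girthE}.}

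The plan is to use a BFS-type layering argument that extracts a short cycle from a connected non-tree graph of nontrivial edge expansion. Fix such a graph $G$ on $n$ vertices with edge expansion $\a>0$, and suppose for contradiction that its girth $g$ is large. First I would pick any vertex $v$ and look at the balls $B_i = \{u : \d_G(u,v) \le i\}$. A standard observation is that as long as $i < \lfloor g/2 \rfloor$ (roughly), the subgraph induced on $B_i$ is a tree, because any edge violating the tree structure — either within a layer or creating a ``shortcut'' — would close a cycle of length at most $2i+1 < g$. In particular, on this initial range of radii, every edge of the boundary $\partial(B_i)$ goes to a genuinely new vertex in layer $i+1$, and moreover the number of edges leaving $B_i$ is at least $\a \cdot \s{B_i}$ whenever $\s{B_i} \le n/2$ (by definition of edge expansion; if $\s{B_i}>n/2$ apply expansion to the complement, which only helps).

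The key step is to turn the expansion bound into \emph{multiplicative} growth of the balls. Since the subgraph on $B_i$ is a tree (in the relevant range), each vertex of $B_i$ has exactly one edge going ``down'' toward $v$, so the number of edges inside $B_i$ is $\s{B_i}-1$, and each vertex has bounded ``up-degree'' only in the sense that the boundary edges are what count. The cleaner route: from $\s{\partial(B_i)} \ge \a\s{B_i}$ and the fact that (in the tree range) distinct boundary edges land on distinct new vertices or at worst the new layer $B_{i+1}\sm B_i$ receives these endpoints, one gets $\s{B_{i+1}} \ge \s{B_i} + \a\s{B_i}/(\text{something})$. Being a little more careful — each new vertex can absorb at most, say, a controlled number of boundary edges before a cycle is forced — I expect to land on $\s{B_{i+1}} \ge (1 + \tfrac{\a}{\a+2})\s{B_i}$ or a similar bound, which is exactly the shape $(2/\a+1)^{-1}$ suggested by the statement. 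Iterating, $\s{B_i}$ grows by a factor $1+\Omega(\a/(\a+1))$ per step as long as we stay in the tree regime and below $n/2$. Since $\s{B_i}\le n$ always, this can continue for at most $O((2/\a+1)\ln n)$ steps before $\s{B_i}$ exceeds $n/2$; once that happens we switch to expanding the complement for one more comparable stretch, or simply note the ball can no longer be a tree. Either way, within $i = O((2/\a+1)(\ln n + 1))$ steps the tree structure must fail, which produces a cycle of length at most $2i+1$, i.e.\ girth at most $2(2/\a+1)(\ln n + 1)+1$, contradicting the assumption and proving the bound. The ``in particular'' clause for $\a\le 1$ then follows by the crude estimate $2(2/\a+1) \le 6/\a$ and $\ln n + 1 \le 2\ln n$ for $n$ large (and trivially otherwise), giving $12\ln n/\a$.

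The main obstacle I anticipate is getting the constant in the growth factor right: I need the precise claim that, in the tree regime, $\s{B_{i+1}} \ge \s{B_i} + c\,\a\,\s{B_i}$ with $c$ such that the final constant matches $2/\a+1$. This hinges on bounding how many boundary edges of $B_i$ can be incident to a single vertex of the next layer before a short cycle appears — two boundary edges hitting the same new vertex already create a cycle through $v$ of length at most $2i+2$, so in the tree regime each new vertex receives exactly one boundary edge, giving $\s{B_{i+1}\sm B_i} = \s{\partial(B_i)} \ge \a\s{B_i}$ and hence $\s{B_{i+1}} \ge (1+\a)\s{B_i}$ — which is even stronger than needed. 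The subtlety is only that edges \emph{within} the new layer could also force cycles, so one must track when $B_i$ stops being a tree versus when it merely stops being an induced forest; handling the boundary between these regimes carefully, while keeping the complement-expansion step for $\s{B_i}>n/2$ clean, is where the $(2/\a+1)$ rather than $(1/\a)$ factor and the additive constants come from.
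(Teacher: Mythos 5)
Your outline is correct and takes a genuinely different route from the paper's. The paper does not grow a ball from an arbitrary vertex: it first passes to a subgraph of minimum degree $\geq 2$, picks an edge $e=uv$ on a cycle there (so that $u,v$ retain degree $\geq \lfloor k/2\rfloor$ after deletion), removes $e$ to get $G'$, observes that removing one edge can drop the edge expansion by at most a factor of $2$, and then grows BFS balls \emph{simultaneously from both $u$ and $v$} in $G'$. Each ball exceeds $n/2$ vertices within radius $R=(2/\a+1)(\ln n+1)$ (tracked via the number of spanned edges $m_x(r)$, which grows by a factor $(1+\a/2)$ per step), so the two balls intersect, giving $\d_{G'}(u,v)\le 2R$ and hence a cycle through $e$ of length $\le 2R+1$ in $G$. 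This two-ball, edge-removal device is exactly what lets the paper avoid ever reasoning about a ball that covers more than half the graph. Your single-ball argument instead grows in $G$ itself, and therefore must cope with the regime $\s{B_i}>n/2$; you do that by switching to geometric \emph{shrinking} of the complement, $\s{V\sm B_{i+1}}\le(1-\a)\s{V\sm B_i}$, concluding that if the tree structure never fails then $B_R=V$ and $G$ is a tree, contradicting the hypothesis. Both routes give the claimed constant (yours via $\tfrac{1}{\ln(1+\a)}+\tfrac{1}{-\ln(1-\a)}\le 2/\a+O(1)$, the paper's via $\tfrac{1}{\ln(1+\a/2)}\le 2/\a+1$), so this is a legitimate alternative proof, arguably more elementary since it needs neither the minimum-degree-2 preprocessing nor the loss of a factor $2$ in the expansion.

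Two points you should firm up. First, the fallback you offer --- ``simply note the ball can no longer be a tree'' once $\s{B_i}>n/2$ --- is false as stated: a ball can cover arbitrarily more than half the vertices and still induce a tree, so the complement-shrinking phase is not optional; you need it to drive $B_R$ to all of $V$. (And for $\a\ge 1$ the complement vanishes in a single further step, which should be noted separately since $-\ln(1-\a)$ is then undefined.) Second, settle the growth factor: your eventual answer $(1+\a)$ is the right one, and your initial guess $(1+\a/(\a+2))$ was reverse-engineered from the statement rather than from the combinatorics. In the tree regime ($g>2(i+1)$) each boundary edge of $B_i$ lands on a distinct new vertex --- a collision would close a cycle of length at most $2(i+1)$ --- so $\s{B_{i+1}\sm B_i}=\s{\partial(B_i)}\ge\a\s{B_i}$ with no loss; the $(2/\a+1)$ in the final bound then emerges from the two phases combined, not from a weakened per-step growth rate.
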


\begin{lemma}\label{lemma:boost}
Let $G$ be an $n$-vertex graph whose every $t$-vertex subgraph has expansion at most $f(t)$, for every $t\ge n/3$ and where $f:[n/3,n]\to\R$ is decreasing. Then $\sep(G)\le (2/3)n f(n/3)$.
\end{lemma}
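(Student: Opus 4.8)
The plan is to build an edge separator for $G$ by repeatedly peeling off non-expanding pieces until what remains is small enough, and then bounding the total number of edges cut by comparing the cuts to the decreasing function $f$. Concretely, I would start with $V_0=V(G)$, so $|V_0|=n$, and at step $j$ look at the subgraph $H_j=G[V_j]$ induced on the current ``large side.'' As long as $|V_j|\ge n/3$, the hypothesis applies to $H_j$: it has edge expansion at most $f(|V_j|)$, so there is a subset $W_j\subseteq V_j$ with $0<|W_j|\le |V_j|/2$ and $|\partial_{H_j}(W_j)|\le f(|V_j|)\cdot|W_j|$. I would move $W_j$ to the ``small side'' and set $V_{j+1}=V_j\setminus W_j$. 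The edges I commit to the separator at this step are exactly those of $\partial_{H_j}(W_j)$ — edges inside the current large side with one endpoint in $W_j$; any edge from $W_j$ to a previously removed vertex was already accounted for earlier.

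First I would observe that this process terminates: each $|W_j|\ge 1$, and since $|W_j|\le|V_j|/2$ we always have $|V_{j+1}|\ge|V_j|/2\ge n/6>0$, so the large side never becomes empty, and the process stops at the first index $m$ with $|V_m|<n/3$; at that moment $|V_m|\ge|V_{m-1}|/2\ge n/6$. The final partition is: the small side $S=V_0\setminus V_m=\bigcup_{j<m}W_j$ (of size between $n/6$ and $2n/3$, hence at most $\frac23 n$ on each side — more carefully, $|S|\le \frac23 n$ because we stop as soon as $|V_m|<n/3$ means $|S|>2n/3$? — I need to be careful here and stop one step earlier if necessary, or note $|S|=n-|V_m|$ and $|V_m|\ge n/6$ gives $|S|\le \frac56 n$, so I should instead continue until $|V_m|\le \frac23 n$, which happens since $|V_m|$ roughly halves), and the large side $V_m$. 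The union of all the $\partial_{H_j}(W_j)$ is then an edge separator of $G$, because deleting it disconnects $S$ from $V_m$: any edge between the two parts lies in some $\partial_{H_j}(W_j)$.

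Next I would bound the number of separator edges. We have
\[
\sep(G)\le \sum_{j<m}\bigl|\partial_{H_j}(W_j)\bigr|\le \sum_{j<m} f(|V_j|)\,|W_j|.
\]
Since $f$ is decreasing and $|V_j|\ge n/3$ for all $j<m$, each term is at most $f(n/3)\,|W_j|$, and $\sum_{j<m}|W_j|=|S|\le \frac23 n$ (this is where the stopping rule must be tuned so that the small side has at most $\frac23 n$ vertices — I would define $m$ as the first step at which $|V_{m}|\le \frac23 n$, equivalently $|S|\ge\frac13 n$; but we also need $|V_m|\ge n/3$ for the last application of the hypothesis, which holds since $|V_m|\ge|V_{m-1}|/2> n/3$). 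Thus $\sep(G)\le f(n/3)\sum_{j<m}|W_j|\le \frac23 n\,f(n/3)$, as claimed.

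The main obstacle I anticipate is purely bookkeeping around the stopping condition: I must choose the moment to stop so that simultaneously (a) both sides of the final partition have at most $\frac23 n$ vertices, and (b) every subgraph to which I apply the expansion hypothesis has at least $n/3$ vertices so that $f$ is defined and the bound $f(|V_j|)\le f(n/3)$ is valid. Because each removed set $W_j$ has size at most half the current large side, the large-side size can overshoot past $\frac23 n$ only mildly, and a short case analysis (possibly splitting the final $W_{m-1}$ if it is too big, or simply noting the geometric decay forces $|V_m|\in[n/3,2n/3]$ after the right number of steps) handles it. Everything else is a one-line telescoping estimate.
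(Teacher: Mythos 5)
Your proposal is correct and is essentially the paper's own argument: iteratively peel off a low-expansion set from the current large side, stop as soon as the accumulated small side reaches size $n/3$ (equivalently, the large side drops to at most $2n/3$), note that the last peel can overshoot by at most half the remaining large side so both final parts lie in $[n/3,2n/3]$, and bound $\sep(G)\le\sum_j|\partial_{H_j}(W_j)|\le f(n/3)\sum_j|W_j|\le\frac{2}{3}nf(n/3)$. The only cosmetic difference is that the paper bounds the expansion of the union $S$ directly via the mediant inequality $\frac{\sum x_j}{\sum y_j}\le\max_j\frac{x_j}{y_j}$ rather than summing term by term, but the content is identical.
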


We can finally give a proof for the main theorem.

\begin{proof}[Proof of Theorem~\ref{theo:main}]
First, suppose that $n$ is a power of $2$, and write $n=2^d$.
Let $G=Q_{d,2k}$ be a subgraph of the $d$-cube as guaranteed by Lemma~\ref{lemma:construction}.
For the rest of the proof set $t_0=d/(6k)^2$.
The assertion of Lemma~\ref{lemma:construction} implies that $G$ has average degree at least $2k$, maximum degree at most $6k$, and girth at least $t_0$.
So to complete the proof (for $n=2^d$) we only need to establish item $(iii)$ of Theorem~\ref{theo:main}.
We will start by proving that every $t$-vertex subgraph of $G$ on at least $t_0$ vertices has edge expansion at most $\frac13(1/\log t)(\log\log t)^2$.
We will then ``boost'' this fact using Lemma~\ref{lemma:boost} and get that every $t$-vertex subgraph as above in fact has an edge separator of size at most $(t/\log t)(\log\log t)^2$, as required.
Note that the statement of the theorem implies that $d \ge 2^{2\cdot 648}$. One consequence of this fact is that $d \ge (\log d)^6$.
We will also use the fact that the theorem's assumption that $k \leq \log d/648$ together with the previous observation implies that $d^{1/3}/(6k)^2 \geq 12$ and that $18k \leq t_0/(3\log t_0)$. Finally, suppose $H'$ is a subgraph of $G$ and that $H'$ is a forest. As $G$ has maximum degree at most $6k$, we have by Fact~\ref{fact:treeSep} and the above relation between $k$ and $t_0$ that
$\sep(H') \le \D(G) \le 6k \le \frac19 t_0/\log t_0$.
Hence, if $H'$ has $t\geq t_0$ vertices then its edge expansion is
at most $(\frac19 t/\log t)/(\frac13 t) = \frac13 (1/\log t)$.

Let then $H$ be a $t$-vertex subgraph of $G$ with $ t \geq t_0$ vertices, and let us prove that the edge expansion of $H$ is at most $\frac13(1/\log t)(\log\log t)^2$. Since we have already established this fact when $H$ is a forest at the end of the previous paragraph, we assume for the rest of this paragraph that $H$ is not a forest. Suppose first that $t < 2^{d^{1/3}}$.
By Lemma~\ref{lemma:girthE}, if the edge expansion of $H$ is at least $1/\log t$ then its girth is at most $12(\log t)^2 < 12d^{2/3} \le d/(6k)^2$.
However, $G$ does not contain a cycle this short.
Therefore, the edge expansion of $H$ is indeed at most $\frac13(1/\log t)(\log\log t)^2$.
Suppose now that $t \ge 2^{d^{1/3}}$.
By Lemma~\ref{lemma:cubeExp} and the fact that the average degree of $H$ is at most $6k$, the edge expansion of $H$ is at most $24k\log d/\log t$. Thus, to prove our claim it suffices to show that $\sqrt{72k\log d} \le \log\log t$.
This indeed follows from the theorem's assumption that $k \le \log d/648$, since it implies that
$$\sqrt{72k\log d} \le (\log d)/3 = \log(d^{1/3}) \le \log\log t \;.$$

Having established that every subgraph of $G$ on at least $t_0$ vertices has small edge expansion, we now wish to show that every such graph
has a small separator. So let $H$ be an arbitrary $t$-vertex subgraph of $G$ with $t \ge t_0$.
We apply Lemma~\ref{lemma:boost} on the graph $H$ with the function $f(x)=\frac13(1/\log x)(\log\log x)^2$, noting that, as required by the lemma, every $t'$-vertex subgraph of $H$ with $t' \ge n/3$ $(\ge t_0)$ has expansion at most $f(t')$, 
and $f:[n/3,n]\to\R$ is decreasing (here we use the fact that, as can be easily checked, $f(t)$ is decreasing for $t \ge 256$, and that $n/3 \ge 256$).
We deduce that $\sep(H) \le (2/3)t f(t/3) \le (t/\log t)(\log\log t)^2$,
as required by item $(iii)$ in Theorem~\ref{theo:main}. This completes the proof of the theorem for the case $n=2^d$.

Finally, let us consider the case of an arbitrary $n$, that is, not necessarily a power of $2$. In this case we set $d=\ceil{\log n}$. Since $n>2^d/2$, a random $n$-vertex subgraph of $Q_{d,2k}$ will have, with positive probability, average degree at least $k$. Let $G$ be such a graph.
Then the maximum degree of $G$ is still at most $6k$, its girth is at least $\log n/(6k)^2$, and it is easy to see that every $t$-vertex subgraph $H$ of $G$
with $t\ge \log n/(6k)^2$ satisfies $\sep(H) \le (t/\log t)(\log\log t)^2$. This completes the proof.
\end{proof}

\section{Proof of Theorem \ref{theo:SSE}}\label{sec:proofSSE}

In the first part of the proof we will show that for every $t$-vertex subgraph $H$ of $G$ with $t \ge \frac12\log n/(6k)^2$, and every $1/t\le\mu\le 2/3$, there is a subset $A \sub V(H)$ of size $\mu t/3 \le \s{A} \le \mu t$ satisfying
\begin{equation}\label{eq:SSEclaim}
\phi_H(A) \le \frac{\log (1/\mu)}{\log t}\cdot(4\log\log t)^2 \;.
\end{equation}
In the second part of the proof we will show that if $t\ge \log n/(6k)^2$ (as guaranteed by the statement) and $1/t\le\mu\le 2/3$ then there are in fact $1/4\mu$ mutually disjoint subsets of the same size and edge expansion only slightly larger than in~(\ref{eq:SSEclaim}).

Before we begin, we collect some useful facts.
One can easily verify that the theorem's assumption on $k$ implies that $t\ge\sqrt{\log n}$ $(\ge 2^{16})$,\footnote{$72k^2 \le (\log\log\log n)^2 \le \sqrt{\log n}$; for the last inequality, note the statement of the theorem implies $\log n \ge 2^{2^{2\cdot 648}}$.} and therefore $\D(H) \le \frac12\log\log\log n \le \log\log t$.
Notice we may assume $\mu \ge 1/\sqrt{t}$, as otherwise~(\ref{eq:SSEclaim}) trivially holds by taking $A$ to be any single vertex, in which case $\phi(A) \le \D(H) \le \log\log t \le (\log (1/\mu)/\log t)(4\log\log t)^2$, as needed.
For the rest of the proof we set
$$f(x)=(1/\log x)\cdot(\log\log x)^2 \;.$$
One can check that $f(x)$ is decreasing for $x\ge 256$.
Throughout the proof, unless otherwise mentioned, we write $\phi(\cdot)=\phi_H(\cdot)$ for the edge expansion in $H$, and $\partial(\cdot)=\partial_H(\cdot)$ for the edge boundary in $H$.

In order to obtain a subset $A \subseteq V(H)$ satisfying~(\ref{eq:SSEclaim}) we will iteratively find smaller and smaller subsets, such that in each step the edge expansion (in $H$) of our subset does not increase by much.
Formally, we claim that for any set $S \sub V(H)$ with $\s{S}\ge \sqrt{t}$ $(\ge 256)$ there is a subset $S_1 \sub S$ of size 
$\frac13\s{S} \le \s{S_1} \le \frac23\s{S}$
such that $\phi(S_1) \le \phi(S)+2f(\s{S})$. To see this, consider an edge separator in the induced subgraph $H[S]$. Namely, let $S=S'\cup S''$ be a partition of $S$ with $\s{S'},\s{S''} \le \frac23\s{S}$ such that the number of edges between $S'$ and $S''$ in $H[S]$, and thus in $H$, is $\sep(H[S])$.
Consider now the edge boundary of $S'$ and $S''$ in $H$.
We have 
$$\s{\partial(S')} + \s{\partial(S'')} = \s{\partial(S)} + 2\sep(H[S]) \;.$$
Moving to edge expansion, observe that this means that
$$\min\{\phi(S'),\,\phi(S'')\} \le \frac{\s{\partial(S')} + \s{\partial(S'')}}{\s{S'}+\s{S''}} =
\frac{\s{\partial(S)} + 2\sep(H[S])}{\s{S}} \le
\phi(S) + 2f(\s{S}) \;,$$
where in the left inequality we used the elementary inequality $(a+b)/(c+d) \ge \min\{a/c,\,b/d\}$,
and in the right inequality we used the fact that $\sep(H[S]) \le \s{S}f(\s{S})$. This
last inequality follows from item~(iii) of Theorem~\ref{theo:main} when $\s{S} \ge \log n/(6k)^2$, while when $\s{S} < \log n/(6k)^2$
it follows from item~(ii) of Theorem~\ref{theo:main}, Fact~\ref{fact:treeSep} and the fact that $f$ is decreasing, as these facts together mean
that $\sep(H[S]) \le \D(H) \le \log\log t \le \sqrt{t} f(t) \le \s{S}f(\s{S})$. Altogether we see that we can take either $S'$ or $S''$ as the set $S_1$.
Applying this claim with $S=S_0=V(H)$ yields a subset $S_1 \sub S_0$ of size $\frac13 \s{S_0} \le \s{S_1} \le \frac23 \s{S_0}$ satisfying
$\phi(S_1) \le \phi(S_0)+2f(\s{S_0}) = 2f(\s{S_0})$, where here we use the fact that $S_0=V(H)$ so $\phi(S_0)=0$.
Having picked $S_i$, for some $i \geq 1$, we can then pick a new set $S_{i+1} \subseteq S_i$ satisfying
$\frac13 \s{S_i} \le \s{S_{i+1}} \le \frac23 \s{S_i}$ and $\phi(S_{i+1}) \le \phi(S_i)+2f(\s{S_i})$.
We stop the process once we obtain a subset $A \sub V(H)$ of size $\s{A} \le \mu t$. Note that this means that $\s{A} \ge \mu t/3$.
Let $V(H)=S_0 \supseteq S_1 \supseteq \cdots \supseteq S_s=A$ denote the subsets we thus obtain. We get from the above observations that
$$\phi(A) \le \sum_{i=0}^{s-1} 2f(\s{S_i}) \le s \cdot 2f(\mu t) \;,$$
where in the second inequality we used the fact that $f(x)$ is decreasing.
Since the number $s$ of iterations is at most $\lceil \log_{3/2}(1/\mu) \rceil \leq 2\log_{3/2}(1/\mu)$, we get
$$\phi(A) \le 2\log_{3/2}(1/\mu) \cdot 2f(\sqrt{t}) \le \frac{\log(1/\mu)}{\log t}\cdot 16(\log\log t)^2 \;,$$
where in the first inequality we used the assumption that $\mu t \ge \sqrt{t}$.
This completes the first part of the proof.


Having found a single non-expanding subset $A$ satisfying~(\ref{eq:SSEclaim}), 
we proceed to find at least $1/4\mu$ disjoint subsets of size in $[\mu t/3,\mu t]$
having small expansion, as required by the theorem's statement.
So let $H=(V,E)$ be a $t$-vertex subgraph with $t \ge \log n/(6k)^2$ and suppose $1/t\le \mu \le 2/3$.
Since $\s{V} \ge \frac12\log n/(6k)^2$, we can find a subset $A_1 \sub V$ satisfying $\phi_H(A_1) \le \varphi(\mu,t)$
where $\varphi(\mu,t)$ is the right hand side of (\ref{eq:SSEclaim}).
Now apply the same argument on the induced subgraph $H[V \sm A_1]$ in order to find a subset $A_2 \sub V\sm A_1$ 
of size in $[\mu t/3,\mu t]$ that satisfies\footnote{A subset $A_2 \sub V \sm A_1$ of size at most $\mu t$ is therefore of size at most $\mu'\s{V \sm A_1}$ with $\mu' \ge \mu$. 
Thus, we can find a subset $A_2$ satisfying $\phi_{H[V \sm A_1]} (A_2) \le \varphi(\mu',t) \leq \varphi(\mu,t)$ where we use the
monotonicity of $\varphi(\mu,t)$ with respect to $\mu$.}
$\phi_{H[V \sm A_1]} (A_2) \leq \varphi(\mu,t)$.
Note that we can pick sets $A_i \sub V \sm \bigcup_{j=1}^{i-1} A_j$ at least $1/2\mu$ times (assume henceforth that $1/2\mu$ is an integer), since as long as we pick fewer sets we are left with at least $t/2 \geq \frac12\log n/(6k)^2$ vertices, so we can still pick another set.
We therefore obtain at least $1/2\mu$ 
mutually disjoint subsets $A_1,\ldots,A_{1/2\mu} \sub V$ of size in $[\mu t/3,\mu t]$ each, satisfying
\begin{equation}\label{eq:final}
\phi_{H[V \sm \bigcup_{j=1}^{i-1} A_j]} (A_i) \le \varphi(\mu,t)
\end{equation}
for every $1\le i\le 1/2\mu$. To finish the proof, we claim that
at least half of the $1/2\mu$ subsets $A_i$ have small expansion in $H$ itself. To see this, note that
\begin{equation}\label{eq:final-1}
\sum_{i=1}^{1/2\mu} \s{\partial_H(A_i)}
\le 2\sum_{i=1}^{1/2\mu} \big\lvert \partial_{H[V \sm \bigcup_{j=1}^{i-1} A_j]} (A_i) \big\rvert = 2\sum_{i=1}^{1/2\mu} \s{A_i}\cdot \phi_{H[V \sm \bigcup_{j=1}^{i-1} A_j]} (A_i) \le t\varphi(\mu,t) \;,
\end{equation}
where in the first inequality we use the fact that each edge is counted (at most) twice,
and in the second inequality we use~(\ref{eq:final}) and the fact that each $A_i$ is of size at most $\mu t$.
Dividing both sides of~(\ref{eq:final-1}) by $\mu t/3$ (which is an upper bound on the size of the sets $A_i$) we get that
$\sum_{i=1}^{1/2\mu} \phi_H(A_i) \leq 3\varphi(\mu,t)/\mu$. Hence, by averaging, at least $1/4\mu$ of the sets
$A_i$ satisfy
$$
\phi_H(A_i) \leq 12\varphi(\mu,t) \le (\log (1/\mu)/\log t)(14\log\log t)^2\;,
$$
thus completing the proof.


\section{Missing Proofs}\label{sec:add}

\subsection{Proof of Lemma~\ref{lemma:girthE}}

\begin{proof}[Proof of Lemma~\ref{lemma:girthE}]
Let $k$ denote the average degree of our graph $G=(V,E)$. Since $G$ is connected and not a tree we have $k \geq 2$.
As is well known, $G$ must have a subgraph of minimum degree at least $\floor{k/2}+1 \ge 2$.
In particular, this subgraph contains a cycle. Let $e=uv$ be any edge in such a cycle, and
let $G'$ be obtained from $G$ by removing $e$. Note that our choice of $e$ guarantees that $\deg_{G'}(u),\deg_{G'}(v) \geq \floor{k/2}$.
We claim that the edge expansion of $G'$ is at least $\a/2$. Indeed, let $S\sub V$ with $e\in\partial_G(S)$. Then $\s{\partial_G(S)} \ge 2$, as otherwise $S$ forms a connected component of $G'$, contradicting the fact that $e$ is not a cut edge in $G$.
Therefore, removing a single edge can decrease the edge expansion of $G$ by at most a factor of $2$.

For a vertex $x\in V$ and a non-negative integer $r$, write $B_x(r)=\{y\,:\,\d_{G'}(x,y)\le r\}$.  
Let $m_x(r)$ denote the number of edges spanned by $B_x(r)$.
Since $G'$ has edge expansion at least $\a/2$, we have $m_x(r+1) \ge (1+\a/2)m_x(r)$ provided $\s{B_x(r)} \le n/2$.
Put $R=(2/\a +1)(\ln(n)+1)$. We claim that $\s{B_v(R)} > n/2$. Indeed, recalling $m_v(1)=\deg_{G'}(v)\ge \floor{k/2}$, we get that if $\s{B_v(R)} \leq n/2$ then
$$m_v(R+1) \ge (1+\a/2)^R\cdot \floor{k/2} \ge e^{\ln(n)+1}\cdot \floor{k/2} \ge  nk/2 \;,$$
which is strictly greater than the number of edges of $G'$ -- a contradiction.
Since we also have $\deg_{G'}(u)\ge \floor{k/2}$ we deduce that $B_u(R)\cap B_v(R)\neq \emptyset$. This means that $\d_{G'}(u,v) \le 2R$. Since $u$ and $v$ are adjacent in $G$ but not in $G'$, we conclude that $G$ has a cycle of length at most $2R+1 = 2(2/\a +1)(\ln(n)+1) + 1$.
\end{proof}

\subsection{Proof of Lemma~\ref{lemma:boost}}

\begin{proof}[Proof of Lemma~\ref{lemma:boost}]
We iteratively remove subsets $S_1,\ldots,S_k \sub V(G)$ as follows.
For the $\ith$ step, consider the graph $G_i := G-\bigcup_{j=1}^{i-1} S_j$ and let $S_i \sub V(G_i)$ have size $\s{S_i} \le \s{V(G_i)}/2$ and expansion in $G_i$ at most $f(\s{V(G_i)})$.
We stop removing subsets once the set $S:=\bigcup_{j=1}^k S_j$ is of size at least $n/3$, and thus
$\s{S} \le \sum_{j=1}^{k-1} \s{S_j} + \s{V(G_k)}/2 = (n+\sum_{j=1}^{k-1} \s{S_j})/2 \le 2n/3 \;.$
Now, observe that every edge in the edge boundary $\partial_G(S)$ is a member of some edge boundary $\partial_{G_j}(S_j)$. Hence, the expansion of $S$ in $G$ is
$$\frac{\s{\partial_G(S)}}{\s{S}} \le \frac{\sum_{j=1}^k \s{\partial_{G_j}(S_j)}}{\sum_{j=1}^k \s{S_j}} \le
\max_{1\le j\le k} \frac{\s{\partial_{G_j}(S_j)}}{\s{S_j}} \le \max_{1\le j\le k} f(\s{V(G_j)}) \le f(n/3) \;,$$
where in the second inequality we used the inequality $(\sum_{j=1}^k x_j)/(\sum_{j=1}^k y_j) \le \max_{1 \le j \le k} (x_j/y_j)$ for non-negative reals $x_j,y_j$,\footnote{We note that while the simple argument here suffices to bound the expansion of $\bigcup S_i$, bounding the expansion of each individual $S_i$ -- which is done in the second part of the proof of Theorem~\ref{theo:SSE} -- is trickier.}
and in the last inequality we used the fact that $\s{V(G_j)} \ge \s{V(G_k)} \ge n-\s{S} \ge n/3$ together with the fact that $f(t)$ is decreasing for $t \ge n/3$.
It follows that $\s{\partial_G(S)} \le \s{S}f(n/3) \le (2/3)nf(n/3)$, completing the proof.
\end{proof}

\subsection{Proof of Lemma \ref{lemma:Cheeger}}

\begin{proof}[Proof of Lemma~\ref{lemma:Cheeger}]
Put $L=dI-A$ where $A$ is the adjacency matrix of $G=(V,E)$.
Note that the $\kth$ eigenvalue of $L$ is $d-\lambda_k$. By the Courant-Fischer well-known min-max theorem, we have
\begin{equation}\label{eq:MinMax}
d-\lambda_k = \min_{w^1,\ldots,w^k} \, \max_{0\neq x\in\spanV\{w^1,\ldots,w^k\}} R_L(x) \;,
\end{equation}
where the minimum is over all collections of $k$ mutually orthogonal, nonzero vectors in $\R^{\s{V}}$, and $R_L(x)=x^t L x/x^t x$ is the Rayleigh quotient.
Let $S_1,\ldots,S_k \sub V$ be mutually disjoint, non-empty subsets, and denote by $\mathbf{1}_{S_i}$ the characteristic $\{0,1\}$-vector of $S_i$.
As it is easy to see that $x^tLy=\sum_{\{u,v\}\in E} (x_u-x_v)(y_u-y_v)$, we deduce that $\mathbf{1}_{S_i}^t L \mathbf{1}_{S_i}=\s{\partial(S_i)}$, and that for
$i \neq j$ we have $\mathbf{1}_{S_i}^t L \mathbf{1}_{S_j}=-e(S_i,S_j)$ where $e(S_i,S_j)$ is the number of edges between $S_i$ and $S_j$.
Consider now any vector $x \in \spanV\{\mathbf{1}_{S_1},\ldots,\mathbf{1}_{S_k}\}$ and write $x = \sum_{i=1}^k c_i \mathbf{1}_{S_i}$. Then
\begin{align*}
x^t L x &= \sum_{i,j=1}^k c_i c_j \cdot \mathbf{1}_{S_i}^t L \mathbf{1}_{S_j} = \sum_{i=1}^k c_i^2 \s{\partial(S_i)} - \sum_{i\neq j} c_ic_j e(S_i,S_j)
\le \sum_{i=1}^k c_i^2 \s{\partial(S_i)} + \sum_{i\neq j} \frac{c_i^2+c_j^2}{2} e(S_i,S_j)\\
&= \sum_{i=1}^k c_i^2 \s{\partial(S_i)} + \sum_{i=1}^k c_i^2 \sum_{j\neq i} e(S_i,S_j) \le 2\sum_{i=1}^k c_i^2 \s{\partial(S_i)} \;.
\end{align*}
The proof now follows since~(\ref{eq:MinMax}) implies
$$d-\lambda_k \le \max_{0\neq x\in\spanV\{\mathbf{1}_{S_1},\ldots,\mathbf{1}_{S_k}\}} R_L(x) \le \frac{2\sum_{i=1}^k c_i^2 \s{\partial(S_i)}}{\sum_{i=1}^k c_i^2\s{S_i}} \le 2\max_{1\le i\le k} \phi_G(S_i) \;.$$
\end{proof}

\section{Concluding Remarks and Open Problems}\label{sec:conc}

\begin{itemize}

\item As we mentioned in Section \ref{sec:intro}, we can replace the $(\log\log t)^2$ term in~(\ref{eq:expansion}) with $(\log\log t)^{1+o(1)}$.
It would be interesting to know whether the $\log\log t$ factors in~(\ref{eq:expansion}) (and thus also in~(\ref{theo:SSE})) can be removed.
One way to obtain this is to improve Lemma~\ref{lemma:cubeExp}. In this regard, we conjecture the following ``higher-order'' isoperimetric property of the hypercube: Among all subgraphs of the $d$-cube on $2^i$ vertices, for every $i\in\N$ at least $d^{1/3}$ say, the subcubes have the largest normalized\footnote{I.e., edge expansion divided by the subgraph's average degree.} edge expansion. (One can also come up with an analogous isoperimetric conjecture for vertex expanders in the hypercube.)
Since a subcube on $t=2^i$ vertices has normalized edge expansion $1/i=1/\log t$,
proving the above conjecture would mean that the additional $\log d$ factor in Lemma~\ref{lemma:cubeExp} is unnecessary for $t \ge 2^{d^{1/3}}$ (note that without the lower bound on $t$, Lemma~\ref{lemma:cubeExp} is tight as witnessed by $K_{1,d}$).
By the proof of Theorem~\ref{theo:main}, this would mean that the graphs $G_{n,k}$ are such that the $\log\log t$ terms in~(\ref{eq:expansion}) and~(\ref{theo:SSE}) can in fact be replaced by $O(k)$. Note that this is best possible, as witnessed by the subgraph $G_{n,k}$ itself.
We note that this would essentially close the gaps in the applications mentioned in Section~\ref{sec:app}, as $k$ is chosen there to be either constant or grow arbitrarily small with $n$.
For example, this would imply a truly polynomial lower bound on the threshold rank in Corollary~\ref{coro:ABS-R}, thus proving that the eigenspace enumeration approach of~\cite{AroraBaSt10} cannot solve $\UG_q(\d)$ in time $\exp(n^{o(1)})$ for constant $\d,q$.

\item It is not hard to see that if every $t$-vertex subgraph $H$ of a graph $G$ satisfies $\sep(H) \le (t/\log t)(\log\log t)^c$ then $G$ must have at most $O(n(\log\log n)^{c'})$ edges.
(Heuristically speaking, the upper bound on the number of edges admits a recursion of the form $f(n) \le 2f(n/2)+(n/\log n)(\log\log n)^c$).
We note that one can construct a graph $G$ satisfying this requirement using arguments similar to those we used to prove Theorem~\ref{theo:main}, and in this case, the resulting graph would have $\Omega(n(\log\log n)^{c''})$ edges. (To be more precise, $c'=c+1$ and $c''=c-1$.)
This means that the graph we construct in Theorem~\ref{theo:main} has the maximum possible number of edges, up to $\log\log n$ factors.

The situation for vertex separators, as in Theorem \ref{theo:LRT}, is not so clear, so it would be interesting to understand the maximum number of edges of $n$-vertex graphs in a hereditary family with vertex separators of size at most $(n/\log n)(\log\log n)^c$.
Although we construct such a family $\F$ that contains $n$-vertex graphs with $\Omega(n(\log\log n)^{c''})$ edges, we cannot rule out the possibility of improving this to (say) $n\log n$.

\item Let us mention two problems that seem to be related to the types of problems studied here
but that (unfortunately) we cannot resolve. The first is a nice problem of G. Kalai. Suppose $\F$ is a hereditary family of graphs and that every graph in the family has a vertex separator of size $n/f(n)$. Then, how fast should $f(n)$ grow so that $\F$ has only $2^{O(n)}$ non-isomorphic graphs on $n$ vertices? It was shown in~\cite{Kalai,Kalai-2} that $f(n) \geq \log^{2+\epsilon}n$ suffices while $f(n) \leq \log^{1-\epsilon}n$ does not.
Closing the gap between these bounds is still open.

The second problem is related to a graph decomposition result from a work of P\v{a}tra\c{s}cu and Thorup \cite{PatrascuTh07}.
They proved that the edge set of every graph can be decomposed into $b=O(\log n)$ subsets $E_1,\ldots,E_b$ so that for every
$1 \leq i \leq b$ the graph spanned by $E_1,\ldots,E_i$ has ``edge expansion'' $1/\log n$, where the notion of edge expansion used
here is slightly different from the (standard) one we used throughout this paper.
It would be interesting to decide whether the parameters in the construction
of \cite{PatrascuTh07} are optimal.

\end{itemize}

\noindent \textbf{Acknowledgment:} We are grateful to Alex Samorodnitsky for very helpful discussions related to this work.




\end{document}